\tikzstyle arrowstyle=[scale=2]
\tikzstyle directed=[postaction={decorate,decoration={markings,
    mark=at position .65 with {\arrow[arrowstyle]{stealth}}}}]
\tikzstyle reverse directed=[postaction={decorate,decoration={markings,
    mark=at position .65 with {\arrowreversed[arrowstyle]{stealth};}}}]
    \tikzstyle left directed=[postaction={decorate,decoration={markings,
    mark=at position -.62 with {\arrow[arrowstyle]{stealth}}}}]
\tikzstyle left reverse directed=[postaction={decorate,decoration={markings,
    mark=at position -.62 with {\arrowreversed[arrowstyle]{stealth};}}}]
\newtheorem{thm}{Theorem}[section]
\newtheorem{lem}[thm]{Lemma}
\newtheorem{prop}[thm]{Proposition}
\theoremstyle{remark}
\newtheorem{rmk}[thm]{Remark}
\newtheorem{fact}[thm]{\textbf{Fact}}
\theoremstyle{definition}
\newtheorem{Def}[thm]{Definition}          
\newtheorem{Notation}[thm]{Notation}              
\newtheorem{Terminology}[thm]{Terminology}
\title{Atiyah classes and the essential obstructions in deforming a singular $G_{2}-$instanton}
\date{\vspace{-5ex}}
\begin{document}
\maketitle
\begin{abstract}When the rank of the bundle is $\geq 2$, in a certain sense, we found an essential obstruction  for the gluing construction of $G_{2}-$instantons with $1-$dimensional singularities. It involves the Atiyah classes  generated by  contracting a vector in $\mathbb{C}^{3}$ with the curvature. Intuitively speaking,  the gluing  does not work if  the tangent connection at
a component of the  $1-$dimensional singular locus is not the twisted Fubini-Study connection on a twisted tangent bundle of $\mathbb{P}^{2}$. Particularly, it fails if the rank of the  bundle is $\geq 3$.  

\end{abstract}

\addtocontents{toc}{\protect\setcounter{tocdepth}{1}}
\section{Introduction}


Gauge theory  plays an important role in the differential topology of $4-$manifolds.  Corresponding  to  the  groups $SU(3)$, 
$G_{2}$, and $Spin(7)$ in the holonomy  list of Berger-Simons \cite{Berger}, \cite{Simons},  Donaldson-Thomas \cite{DT}  and  Donaldson-Segal \cite{DS}  intend to generalize the  gauge theory in dimensions $2,\ 3,\ 4$ to $6,\ 7,\ 8$.  In  dimension $7$, the connections of interest are the projective $G_{2}-$instantons i.e. a $U(n)-$connection $A$ such that the  curvature $F^{0}_{A}$ of the induced $PU(n)-$connection  satisfies the following equation 
\begin{equation}\label{equ proj instanton}
\star(F^{0}_{A}\wedge \psi)=0,
\end{equation}
where $\psi$ is the co-associative $G_{2}-$form. To  understand the boundary of the  moduli and  to construct examples of singular instantons  via gluing, a Fredholm  theory is important. For instantons with isolated singularities, the indicial roots are discrete. 
However,  those of $1-$dimensional singularities are not.  Finite dimensional obstructions can prevent a gluing construction: see the work of Brendle Kapouleas  \cite{BK} on Einstein metrics. Infinite dimensional obstructions make it even harder: see the work of Chen  \cite{GaoChen} on twisted connected sum of  $G_{2}-$structures with conical singularities along circles.  An option is to  add parameters into the domain Banach space. For  $\infty-$dimensional co-kernel, we need an $\infty-$dimensional  parameter space. 
On singular $G_{2}-$instantons and Hermitian Yang-Mills connections, in addition to deforming the connection,  we can pull back the $G_{2}-$structures by certain diffeomorphisms in which the Frechet partial derivative yields  the \textit{Auxiliary operator}. This yields the extended linearized operator. Our main result shows  a necessary condition for such a scheme to work for singular $G_{2}-$instantons.

 \textbf{Theorem A}. \textit{In an ideal  configuration  of $G_{2}-$instanton with $1-$dimensional singularities (Definition \ref{Def gdc}),
  the usual linearized operator \eqref{equ introduction formula for deformation operator} does not have closed range}.   \textit{The extended linearized operator \eqref{equ l tilde} has closed range only if at each circle $\gamma_{i}$ (as in Definition \ref{Def gdc}), the model connection  is  the twisted Fubini-Study connection on a twisted  tangent bundle of $\mathbb{P}^{2}$. Particularly,  the rank of the bundle must be $2$}. \\

The twisted Fubini-Study connection is defined up to a smooth bundle  isomorphism on $\mathbb{P}^{2}$. For bounded linear operators between Banach spaces, the range is not closed implies $\infty-$dimensional co-kernel. Theorem \textbf{A} implies the non-vanishing of the co-kernel on compact $7-$folds, including the twisted connected sums \cite{SWal}.  The co-kernel is called the obstruction but  is different from the \textit{essential obstruction} below.\\
 

 \textbf{Corollary B}:  \textit{In the tame configuration  of $G_{2}-$instanton with $1-$dimensional singularities over a compact $7-$manifold $M$ under Definition \ref{Def gdc}, \eqref{equ usual L compact}, and \eqref{equ extended  usual L compact}, for any $\delta\geq 0$, 
  the usual linearized operator \eqref{equ usual L compact} is not surjective. Suppose 
  \begin{itemize}\item  the rank of the bundle is $2$ but the  model connection  at some circle  is  not the twisted Fubini-Study connection on a twisted  tangent bundle of $\mathbb{P}^{2}$, or
  \item   the rank of the bundle  is  $\geq 3$.
  \end{itemize}
 Then the extended linearized operator \eqref{equ extended  usual L compact} is not surjective}. \\

Gluing construction of $G_{2}-$instantons with $1-$dimensional singularities on twisted connected sums is mentioned by Jacob-Walpuski in \cite{JW}. 
Corollary \textbf{B} says that the gluing is obstructed if  one of the tangent connections is  not twisted Fubini-Study. This is different from  smooth $G_{2}-$instantons on twisted connected sums considered by S\'a Earp-Walpuski \cite{SWal}, in which assuming the two Lagrangian subspaces in a sheaf cohomology intersect transversally \cite[Theorem 1.2]{SWal}, the co-kernel is trivial \cite[Theorem 3.24 Step 2]{SWal}. This is indeed the case for the concrete examples \cite{Walpuski}, \cite{MNS}.

We do not  need the $G_{2}-$structure to be globally co-closed though  it might well be in the cases of interest. We only need the flexible functorial conditions I-V   which can be easily verified for the example in Corollary \textbf{B}. 

On other geometric objects,  there are perturbation  theories  deforming the singular locus.   For example, see Takahashi's deformation \cite{Takahashi} of $\mathbb{Z}_{2}-$harmonic spinors in dimension $3$. Very  recently, Donaldson \cite{Donaldson} developed the deformation for multi-valued harmonic functions. Similarly to  \cite{Donaldson}, here any Green's function must possess a leading term disabling the deformation. In a certain sense it can not be ``overcome" by adding the vector fields if the essential obstruction does not vanish (Lemma \ref{lem bad solution go to infty when k goes to infinity} below).   For minimal surfaces with non-isolated singularity, please see the work of Mazzeo-Smale \cite{MazzeoSmale} that perturbs the singularities away.  This generalizes  Hardt-Simon  perturbation \cite{HS} for isolated singularities. 
  

 In the model setting of Corollary \textbf{B}, through a natural linear injection, the contraction of a (constant) vector in $\mathbb{C}^{3}$ with the curvature $F_{A}^{0}$ is mapped to the cohomology $H^{1}[\mathbb{P}^{2},(EndE)(-1)]$. This  is called an Atiyah class. They form a complex  $3-$dimensional subspace.  We define the \textit{essential obstruction} as the finite dimensional quotient  \begin{equation}\label{equ def ess obstruction}\frac{H^{1}[\mathbb{P}^{2},(EndE)(-1)]}{\{\textrm{Atiyah classes}\}}.\end{equation}
 Please see Proposition  \ref{prop atiyah} below. On gluing construction of Einstein metrics, Biquard \cite{Biquard} also found an obstruction involving curvature. 

Theorem \textbf{A} can be understood as a ``good news" for the compactification of moduli of smooth instantons, in conjunction with the work of Tian \cite{T} and the codimensional $6$ conjecture therein. On a compact  $G_{2}-$manifold with a unitary vector bundle,  it is reasonable to ask ``how often" (in a sense that needs to be specified) we  can see other model connections than the twisted Fubini-Study as the singularity model of  the ``limit" of a sequence of smooth ones. 

The vector fields we allow are spanned by all the $7$ directions (coordinate vectors) near a component of a singular circle while the coefficients only depend on $r$ and $s$. Please see Section \ref{subsect vect fields} below. This is  the advantage of the Euclidean space $\mathbb{C}^{3}\setminus O$ as the simplest Calabi-Yau cone: there are constant vector fields on the $7-$dimensional product $(\mathbb{C}^{3}\setminus O)\times \mathbb{S}^{1}$ deforming the circle $O\times \mathbb{S}^{1}$ and also generating eigen-sections of the link operator with respect to eigenvalue $-1$. The case of more general vector fields remains mysterious.  We do not know whether there is any analogous structure for general Calabi-Yau cone over a regular Sasakian Einstein $5-$manifold. 

Briefly  speaking, under the conditions therein, the proof of  Theorem \textbf{A} for the extended linearized operator  is an assembling of the following 3 facts.
\begin{enumerate}\item The essential obstruction vanishes if and only if the tangent connection is a twisted Fubini-Study connection on $T^{1,0}\mathbb{P}^{2}(k)$ (Lemma \ref{cor rigidity of bundle on P2}).
\item  The range of the model auxiliary operator is in the span of Atiyah classes (Proposition \ref{lem equ for X vs equ for u and v}).
\item Under the surjectivity condition III$^\star$, if the essential obstruction is non-trivial, we can construct a singular sequence violating closed range (Lemma \ref{lem bad solution go to infty when k goes to infinity}). 
\end{enumerate}

Organization of the paper:  Almost all definitions related to Theorem \textbf{A} and Corollary \textbf{B} are in Sections \ref{sect Preliminary} and \ref{sect ext lin}. In Section \ref{section local theory} we define the Atiyah classes in $Eigen_{-1}P$ and use Riemann-Roch to show that the cohomology $H^{1}[\mathbb{P}^{2},(EndE)(-1)]$ consists only of Atiyah classes is equivalent to that $E$ is a twisted tangent bundle. In Section \ref{Sect Aux} we state and prove the formula for the auxiliary operator, leaving  routine tensor calculations to the Appendix. In Section \ref{proof of Theorem A} we prove the main results using separation of variables, Sasakian geometry of the linearized operator, modified Bessel functions, and functional analysis. \\

\textbf{Acknowledgement:} The author is grateful  to Simon Donaldson for  encouragements and  conversations on this project. 

\section{Preliminary\label{sect Preliminary}}
In this section we define the configuration required in Theorem \textbf{A}. 
 \begin{Def}\label{Def gdc}  Throughout, a ball $B(R)$ is always in $\mathbb{C}^{3}$ and centred at the origin.  A  \textit{tame configuration of $G_{2}-$instanton with $1-$dimensional singularities} consists of: 
 \begin{enumerate}
 \item finitely-many disjoint embedded circles (embedded $\mathbb{S}^{1}$'s) $\gamma_{i},\ i=1,...,l$ with trivial normal bundle in a $7-$manifold $M$, and mutually disjoint  tubular neighborhood of  $\gamma_{i}$ diffeomorphic to $[B(100R_{0})\setminus O]\times \mathbb{S}^{1}$ for some $R_{0}>0$;
 \item a smooth unitary connection $A$ on a bundle $E\rightarrow M\setminus \gamma$ with rank  $n\geq 2$ such that  in each tubular ball as above, $(A,E)$ is  equal to the pullback of a non-projectively flat Hermitian Yang-Mills connection $(A_{i},  E_{i})\rightarrow \mathbb{P}^{2}$ via the standard fibration map $(\mathbb{C}^{3}\setminus O)\times \mathbb{S}^{1}\rightarrow \mathbb{P}^{2}$;
 \item a $G_{2}-$structure on $M$  equal to the standard one near each $\gamma_{i}$ under the same coordinate;
 \item Banach spaces $Y$, $\mathcal{B}$, and $\chi(M,TM)$ that satisfy  condition I, IV, and V below. 
 \end{enumerate}

A tame configuration is \textit{ideal}  if condition II, III, and III$^{\star}$ hold. 
\end{Def}
The reason we can assume $R_{0}$ is  independent of $i$ is that there are only finitely-many circles. 
The results in the introduction are independent of $R_{0}$ as long as it is $>0$. Many discussions below are under the coordinate chart in the first bullet point above. This should be  clear from context. For example, see condition II below. 

The following terms make it convenient. 
\begin{Terminology}\label{Term punched ball}The manifold $(\mathbb{C}^{3}\setminus O)\times \mathbb{S}^{1}$ is called the \textit{model space}. The open set  $B(R)\times \mathbb{S}^{1}$  and the punched set  $[B(R)\setminus O]\times \mathbb{S}^{1}$ are called the \textit{tubular ball} and \textit{punched tubular ball}, respectively. The punched tubular ball with  radius $R=\infty$ is the model space. 
\end{Terminology}

Let $r$ denote the distance to the origin in $\mathbb{C}^{3}$. This is also the Euclidean distance to the circle $O\times \mathbb{S}^{1}$ in $\mathbb{C}^{3}\times \mathbb{S}^{1}$. Sometimes it is denoted by $r_{x}$ as a function (see \eqref{equ holder norm} below).  
 \subsection{The usual linearization}
Let $\Omega^{k}_{adE}$ denote the bundle of  $adE-$valued $k-$forms. With gauge fixing and monopole term, the usual linearization of   \eqref{equ proj instanton} in the connection is a first order elliptic operator $\underline{L}$ that maps $C^{\infty}[M^{7}\setminus\gamma, \Omega^{0}_{adE}\oplus \Omega^{1}_{adE}]$ to itself:
\begin{equation}\label{equ introduction formula for deformation operator}\underline{L}[\begin{array}{c}
\sigma   \\
  a   
\end{array}]=[\begin{array}{c}
d_{A}^{\star}a   \\
  d_{A}\sigma+\star(d_{A}a\wedge \psi)  
\end{array}], \end{equation}
 where $\sigma$ is a section and $a$ is a $1-$form, both $adE-$valued. To avoid heavy notation \textit{we henceforth suppress the bundles and even the domain manifold in the notation for the Banach spaces, including  the weighted Schauder spaces etc}. 

  Let the domain of the usual linearized operator be a Banach space $Y$ that is  a subspace of  $C^{1}(M\setminus \gamma)$. Likewise, let the target $\mathcal{B}$ be  a Banach space that is  a subspace of $C^{0}(M\setminus \gamma)$, such that the following holds. 
 
\begin{equation}\label{equ bound usual L}\textrm{Condition I}:\ \ \ \ \underline{L}:\ \ Y\rightarrow \mathcal{B}\ \ \textrm{is bounded}.\nonumber\end{equation}

To construct singular sequence, we need two more conditions. The first is the lower bound comparing the norm of $Y$ to the standard weighted $C^{0}-$norm whose sections are $O(\frac{1}{r})$ near the circles. 
 \begin{equation}\label{equ condition 1 Y}
\textrm{Condition II}:\ \ \ \  \|\xi\|_{Y}\geq N\|Res|_{\underline{R}_{0}}\xi \|_{C^{0}_{1}[B(\underline{R}_{0})\times \mathbb{S}^{1}]}\ \textrm{for some}\ 0<\underline{R}_{0}<R_{0}.\nonumber
 \end{equation}
  where $Res|_{\underline{R}_{0}}$ is the restriction of  $\xi$ onto the punched ball of radius $\underline{R}_{0}$. 

 The other condition is an upper bound on the $\mathcal{B}-$norms of  a particular sequence of compactly supported sections. Namely,  let $\chi$ be a cutoff function as below \eqref{equ bad h}.  We assume  there is a unit vector $\zeta\in Eigen_{-1}P$ 
(which is required to be perpendicular to the Atiyah classes if the essential obstruction is non-trivial) 
 such that
 \begin{equation}\label{equ condition 2 B}\textrm{Condition III}:\ \ \ \ 
 || \frac{y^{\delta}\chi(ky)K_{0}(ky)\sin ks}{k}\cdot I\zeta||_{\mathcal{B}}\leq C_{\mathcal{B},k_{0}}\  \textrm{for some}\ \delta\geq 0\nonumber
 \end{equation}
where  $C_{\mathcal{B},k_{0}}$ is a  constant  independent of integer $k\geq k_{0}$ for some  $k_{0}\geq 1$.  Moreover, we define 
$$ \textrm{Condition III}^{\star}:\ y^{\delta}\chi(ky)K_{0}(ky)\sin ks\cdot I\zeta  \in Range \underline{L}\ \ \textrm{for any}\ k \ \textrm{as above}. $$
The range of the extended linearized operator $L$ contains the range of the usual $\underline{L}$.
Because of the the exponential decay of the modified Bessel function of second kind $K_{0}(x)$ for  large $x\geq 1$ (see \cite{Watson} for a comprehensive theory), we expect no difficulty in checking condition III for a specific Banach space $\mathcal{B}$. Please see the proof of Corollary \textbf{B} below.



 \subsection{The vector fields\label{subsect vect fields}}
 Let $(e_{i},\ 1\leq i\leq 6)$ be the standard basis of $\mathbb{R}^{6}$ and $e^{i}$ be the dual basis. Near the circle $O\times \mathbb{S}^{1}\subset \mathbb{C}^{3}\times \mathbb{S}^{1}$, we consider vector fields of the following form. 
 \begin{equation}\label{equ vector field} X=X_{s}\frac{\partial}{\partial s}+\Sigma_{i=1}^{6}X_{i}e_{i}\ \textrm{where the coefficients}\ X_{s},\ X_{i}\ \textrm{only depend  on}\ r\ \textrm{and}\ s. 
 \end{equation}
The global  vector fields are as follows. 
\begin{Def}\label{Def vect fields}Let $\mathfrak{X}(M,TM)$ be a Banach space  of  vector fields on $M$ which is a subspace in  $C^{1}(M\setminus \gamma)$. We say it satisfies \textit{Condition} IV if  the restriction of an arbitrary vector field in $\mathfrak{X}(M,TM)$ onto the tubular ball $B(R_{0})\times \mathbb{S}^{1}$ of each  circle defines  a bounded linear map from $\mathfrak{X}(M,TM)$ to the space $\mathfrak{X}_{R_{0}}$ of vector fields of the form   \eqref{equ vector field} (across the circle) with norm
\begin{eqnarray*}
||X||_{\mathfrak{X}_{R_{0}}}&=&\Sigma_{j=1}^{l}\Sigma_{i=1}^{7}\{|X_{i}|_{C^{0,1}[B(R_{0})\times \mathbb{S}^{1}]}+|\frac{\partial X_{i}}{\partial r}|_{C^{0}[(B(R_{0})\setminus O)\times \mathbb{S}^{1}]}+|\frac{\partial X_{i}}{\partial s}|_{C^{0}[(B(R_{0})\setminus O)\times \mathbb{S}^{1}]}\end{eqnarray*} 
where $X_{7}\triangleq X_{s}$. We want our vector fields to be Lipschitz even across the circles in line with the existence and  uniqueness of flows. 
\end{Def}
  \section{The extended linearized operator\label{sect ext lin}}
  \subsection{The auxiliary operator}
  We pullback the $G_{2}-$structure via a diffeomorphism $\chi$ integrated from a vector field $X\in \mathfrak{X}(M^{7},TM^{7})$ (at $t=1$). The instanton equation becomes
  \begin{equation}\label{equ instanton with diffeo}\star_{\chi^{\star}\phi}[F^{0}_{A}\wedge (\chi^{\star}\psi)]=0. \end{equation}
By Cartan formula, assuming $A$ is a projective instanton, the linearization in the diffeomorphism at $Id_{M}$ yields the \textit{Auxiliary operator} :
  \begin{equation}\label{equ aux non model}\star_{\phi}(F^{0}_{A}\wedge d[X\lrcorner\psi])+\star_{\phi}[F^{0}_{A}\wedge (X\lrcorner d\psi)].
\end{equation}
If $A$ is projectively flat, it vanishes. The second term vanishes in the punched tubular balls near each circle as the $G_{2}-$structure therein is standard. 

Under Definition \ref{Def vect fields} on the vector fields, we assume the following on the extended linearized operator. 

\begin{equation}\label{equ condition IV}\textrm{Condition V}:\ \ \ \ \ L:\ \ \mathfrak{X}(M^{7},TM^{7})\oplus Y\rightarrow \mathcal{B}\ \textrm{is bounded},\nonumber\end{equation}
where $L$ is  the linearization of \eqref{equ instanton with diffeo} with respect to the connection $A$ and the diffeomorphism $\chi$ (still with monopole term and gauge fixing): 

\begin{equation}\label{equ l tilde}L\left|\begin{array}{c}X\\
\sigma   \\
  a   
\end{array}\right |=\left | \begin{array}{c}
d_{A}^{\star}a   \\
  d_{A}\sigma+\star(d_{A}a\wedge \psi)+\star [F^{0}_{A}\wedge d(X\lrcorner \psi)]+\star [F^{0}_{A}\wedge (X\lrcorner d\psi)]  
\end{array}\right |.  \end{equation}
Please compare \eqref{equ l tilde} with  formula \eqref{equ introduction formula for deformation operator} of the usual linearization. 
The definitions involved in Theorem \textbf{A} are all established. 

\subsection{The model problem}
The model data on $(\mathbb{C}^{3}\setminus O)\times \mathbb{S}^{1}$  is the pullback of  a non-projectively flat Hermitian Yang-Mills connection $A$ on a bundle $E\rightarrow \mathbb{P}^{2}$ with rank $\geq 2$ and the standard  $G_{2}-$structure $(\phi_{euc},\ \psi_{euc})$.  Here we abused notation with the bundle ``$E$" on the manifold in Definition \ref{Def gdc}.2. 
The model usual linearized operator is
 \begin{equation}\label{equ underlineL0}
 \underline{L}_{0}=I\cdot [\frac{\partial}{\partial s}-\underline{T}(\frac{\partial}{\partial r}-\frac{P}{r})]
 \end{equation}
on the pullback of  the bundle  $$Dom=adE^{\oplus (4)}\oplus \Omega_{sba}^{1}(adE)\rightarrow \mathbb{S}^{5}$$ whose  rank is $8\times rank(adE)$. Moreover,
\begin{itemize}
\item  $\Omega_{sba}^{1}(adE)$ is the bundle of semi-basic $adE-$valued $1-$forms i.e. the pullback of   $\Omega^{1}(adE)\rightarrow \mathbb{P}^{2}$,\item  and $I,\ \underline{T}$ are isometries of  $Dom$. They anti-commute and generate a quaternion structure by $I\underline{T}=-K$. \end{itemize}
Please  see \cite[Lemma 5.3]{W}) for more. 

Let $\cong/\cong_{\mathbb{C}}$ mean  real$/$complex isomorphisms between two finite dimensional vector spaces. Part of the spectral theory for the link operator $P$ in \cite[Theorem A and D]{W} is  the following diagram of isomorphisms:
\begin{center}\begin{tikzpicture}
  \node at (9,3.3) {$Eigen_{-2}P$}; 
  \node at (9,1.6) { $H^{1}[\mathbb{P}^{2},EndE(-2)]$};

			\draw[-,semithick] (8.9, 2) -- (8.9, 3);

	  \node at (4,3.3) {$Eigen_{-1}P$};
  \node at (4,1.6) {$H^{1}[\mathbb{P}^{2},EndE(-1)]$};

			\draw[-,semithick] (4.2, 3) -- (4.2,2);
		\draw[-,semithick] (5.7, 1.6) -- (7.3,1.6);
		\draw[-,semithick] (5, 3.3) -- (8,3.3);
		 \node at (6.5, 3.8) {``Serre duality"};
		  \node at (6.5, 1) {Serre duality};
		   \node at (3.8, 2.5) {$\cong_{\mathbb{C}}$};
		      \node at (9.3, 2.5) {$\cong_{\mathbb{C}}$};
		\end{tikzpicture}\end{center}
The symbol ``$Eigen_{\mu}P$" means the eigen-space of $P$ of the eigen-value $\mu$.


The extended linearized operator \eqref{equ l tilde} becomes
 \begin{equation}\label{equ underlineL0 extended}
 L_{0}(\xi,X)=I\cdot [\frac{\partial}{\partial s}-\underline{T}(\frac{\partial}{\partial r}-\frac{P}{r})]+\star[F_{A}\wedge d(X\lrcorner\psi_{euc})].
 \end{equation}
\subsection{A brief remark about usual weighted Schauder spaces \label{sect weighted schauder}}
Following \cite{JasonLotay} and for Corollary \textbf{B}, we discuss  the standard weighted Schauder spaces of bundle sections. \subsubsection*{On a punched tubular ball with radius $R$ for  the bundle $Dom$}
 Let the H\"older semi-norm  be \begin{equation}\label{equ holder norm} [u]_{C^{0,\alpha}_{0}} \triangleq \sup_{x,y\in [(B(R)\setminus O)\times \mathbb{S}^{1}],\  O\times \mathbb{S}^{1}\cap\overline{xy}=\varnothing}[\min(r_{x},r_{y})]^{\alpha} \frac{|P_{\overrightarrow{xy}}[u(x)]-u(y)|}{d^{\alpha}(x,y)}\end{equation}
where
\begin{itemize}
\item $r_{x}$ is the distance from the $\mathbb{C}^{3}-$component of $x$ to the origin (see below Terminology  \ref{Term punched ball}),
\item $\overline{xy}$ is the shortest line segment (geodesic) joining $x,\ y$ and realizing the distance $d(x,y)$, and
\item $P_{\overrightarrow{xy}}$ is the parallel transport from $x$ to $y$ via the segment and  the connection in the tame configuration. 
\end{itemize}

 Let the norm  $|u|_{C^{0}_{0}}$ (and  $|u|_{C^{0}}$ which means the same) be simply $\sup_{x\in [(B(R)\setminus O)\times \mathbb{S}^{1}]}|u|(x)$. It only depends  on the bundle metric thus also applies  to a vector field.  

The $C^{1,\frac{1}{2}}_{0}-$norm is defined by 
\begin{equation}\label{equ def C1 1/2 0}|u|_{C^{1,\frac{1}{2}}_{0}}=|u|_{C^{0,\frac{1}{2}}_{0}}+\Sigma_{D=\frac{\partial}{\partial r},\frac{\partial}{\partial s},\frac{\nabla_{\mathbb{S}^{5},A}}{r}}|Du|_{C^{0,\frac{1}{2}}_{1}}.\end{equation}

When $R$ is finite, according to the principle \cite[(6.10)]{Trudinger}, the above norm treats  $O\times \mathbb{S}^{1}$ as boundary but not the other piece $\partial [B(R)]\times \mathbb{S}^{1}$. 

The weighted Schauder space $C^{k,\frac{1}{2}}_{p}$ is simply defined by the  multiplication with the factor $r^{p}$: $$|\xi|_{C^{k,\frac{1}{2}}_{p}}\triangleq |r^{p}\xi|_{C^{k,\frac{1}{2}}_{0}},\ k=0,\ 1.$$
For example, a section is in $C^{k,\frac{1}{2}}_{1}$ if and only if the multiplication by $r$ is in $C^{k,\frac{1}{2}}_{0}$. This implies the norm is   $O(\frac{1}{r})$ near the circle $O\times \mathbb{S}^{1}$.
\subsubsection*{Over a compact manifold}
Under a tame configuration over a compact manifold $M$, we can finitely cover the whole manifold by 
\begin{itemize} \item tubular balls with radius $10R_{0}$ and 
\item  geodesic convex balls  away from the tubular balls of radius $7R_{0}$ centered at components of $\gamma$,  such that balls of double radius are still geodesic convex and avoid the same tubular balls.
\end{itemize}
This can be achieved by taking a small enough ball (regarding $R_{0}$ and the Riemannian metric induced by the $G_{2}-$structure) at  any point not in the  tubular balls of radius $10R_{0}$ (which some of the geodesic convex balls still intersect).  Therefore with the tubular balls of radius $10R_{0}$, an (open) cover is obtained.  Then take a finite sub-cover.

 The next step is to  \textit{simply use partition of unity to patch 
the local norms to get the global}. On the geodesics balls, the usual Schauder norm is defined as a special case in \cite[Definition 4.3]{JasonLotay}. According to our choice, the cutoff functions $\psi_{i}$ corresponding to each tubular ball (in the partition of unity)  is  $\equiv 1$ in the even smaller tubular balls of radius $5R_{0}$.

\section{Atiyah classes \label{section local theory}}
In this section we  show that the essential obstruction vanishes for any only for $T^{1,0}\mathbb{P}^{2}(k)$. 

We recall some Sasakian geometry on the standard round $\mathbb{S}^{5}$ (of radius $1$ in $\mathbb{R}^{6})$.  Let $\upsilon$ and $\eta$ be the standard Reeb vector field and contact form on $\mathbb{S}^{5}$. There are three forms $\frac{d\eta}{2}$, $H$, $G$ on $\wedge^{2}D^{\star}$, where $D^{\star}\triangleq \eta^{\perp}$ is the contact co-distribution of rank $4$. The metric contraction between a semi-basic ($D^{\star}-$valued)  $1-$form with each of the forms is a complex structure on $D^{\star}$ denoted by $J_{0},\ J_{H},\ J_{G}$ respectively. By  metric pulling down, these complex structures also act on the contact distribution $D\triangleq \upsilon^{\perp}$. They form a quaternion structure on both $D$ and $D^{\star}$. This structure can also be generalized to the bundle $Dom$ for the linearized operator. The action of $I$ on semi-basic $1-$forms (including $Eigen_{-1}P$ and $Eigen_{-2}$), is $J_{0}$, and  the action of $\underline{T}$ on these forms is $-J_{G}$. The quaternion structure is determined by  $$J_{H}J_{0}=J_{G}.$$
  Let $\star$ denote the Hodge star of the Euclidean metric on the model space $(\mathbb{C}^{3}\setminus O)\times \mathbb{S}^{1}$, and $\star_{D^{\star}}$ the one on the contact co-distribution with respect to the standard  metric on $\mathbb{S}^{5}$.

The pullback of the projective curvature $F^{0}_{A}$ of the Hermitian Yang-Mills connection on $\mathbb{P}^{2}$ is $\star_{D^{\star}}-$anti self-dual i.e.   invariant under the quaternion structure $J_{0},\ J_{H},\ J_{G}$.  Let $d_{0}$ denote the transverse exterior differential operator $d-\eta\wedge L_{\upsilon}$. The square $d^{2}_{0}$ does not vanish in general.   We call a $D-$valued vector semi-basic and let $\sharp_{D^{\star}}$ denote the  metric pulling up of a semi-basic vector (field), which is a semi-basic form.  Please see \cite[Section 3]{W} for a comprehensive discussion.

\subsection{The map}
Now we define the injection. 
\begin{prop}\label{prop atiyah}Let  $(E,A)\rightarrow \mathbb{P}^{2}$ be a non-projectively flat Hermitian Yang-Mills bundle  with rank $n\geq 2$. For any (constant) vector $Y\in \mathbb{R}^{6}$, the bundle valued $1-$form  $r(Y\lrcorner F^{0}_{A})$ lies in $Eigen_{-1}P$. The resultant linear map $$\boxminus: \mathbb{R}^{6}\rightarrow Eigen_{-1}P\ \ (\cong _{\mathbb{C}}H^{1}[\mathbb{P}^{2},(EndE)(-1)])$$  is a complex injection. It is an isomorphism if and only if $E=(T^{1,0}\mathbb{P}^{2})(k)$. 
\end{prop}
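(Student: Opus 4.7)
The plan is to verify the four assertions of Proposition \ref{prop atiyah} in sequence: membership of $r(Y\lrcorner F^{0}_{A})$ in the $-1$ eigenspace, complex linearity of $\boxminus$, injectivity, and the isomorphism characterization.

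For the first two assertions I would exploit scaling and Hodge type. Because $F^{0}_{A}$ is the pullback of a curvature form on $\mathbb{P}^{2}$, it is $\mathbb{C}^{\star}$-invariant and annihilates both $\partial_{r}$ and $\upsilon$, so $Y\lrcorner F^{0}_{A}$ is semi-basic. A constant vector field $Y$ on $\mathbb{C}^{3}$ transforms as $\rho_{\lambda}^{\star}Y=\lambda^{-1}Y$ under $\rho_{\lambda}:x\mapsto \lambda x$, hence $\rho_{\lambda}^{\star}(Y\lrcorner F^{0}_{A})=\lambda^{-1}(Y\lrcorner F^{0}_{A})$ and $r(Y\lrcorner F^{0}_{A})$ is scale-invariant. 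Under the conical shape of $\underline{L}_{0}$ in \eqref{equ underlineL0}, scale-invariant semi-basic forms correspond exactly to $Eigen_{-1}P$. For complex linearity, pass to the Dolbeault representative of $H^{1}[\mathbb{P}^{2},(EndE)(-1)]$ via the diagram. Writing $F^{0}_{A}=(F^{0}_{A})_{i\bar{j}}\,dz^{i}\wedge d\bar{z}^{j}$ using that HYM forces type $(1,1)$, and decomposing $Y=Y^{1,0}+\overline{Y^{1,0}}$, the $(0,1)$-part of $Y\lrcorner F^{0}_{A}$ representing the class is $Y^{i}(F^{0}_{A})_{i\bar{j}}\,d\bar{z}^{j}$, which is $\mathbb{C}$-linear in $Y^{1,0}$; under the standard $\mathbb{R}^{6}\cong_{\mathbb{C}}\mathbb{C}^{3}$ via $Y\mapsto Y^{1,0}$, this yields complex linearity of $\boxminus$.

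For injectivity, elements of $Eigen_{-1}P$ are genuine smooth sections on $\mathbb{S}^{5}$, so $\boxminus(Y)=0$ forces $Y\lrcorner F^{0}_{A}\equiv 0$ as a form on $\mathbb{C}^{3}\setminus O$. Through the Euler sequence $0\to \mathcal{O}(-1)\to V\otimes \mathcal{O}\to T^{1,0}\mathbb{P}^{2}(-1)\to 0$ with $V=\mathbb{C}^{3}$, the vector $Y$ corresponds to a holomorphic section $\underline{Y}$ of $T^{1,0}\mathbb{P}^{2}(-1)$ that vanishes only at $[Y]\in\mathbb{P}^{2}$. At each $p\neq [Y]$, $\underline{Y}(p)\neq 0$ would have to lie in the kernel of $v\mapsto v\lrcorner \tilde{F}^{0}_{A}(p):T^{1,0}_{p}\to \Omega^{0,1}_{p}\otimes EndE_{p}$. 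The non-projectively-flat hypothesis gives $\tilde{F}^{0}_{A}\not\equiv 0$, and at a generic point of its support this contraction has full rank $2$, forcing $\underline{Y}(p)=0$---contradicting non-vanishing of $\underline{Y}$ on $\mathbb{P}^{2}\setminus\{[Y]\}$ unless $Y=0$.

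For the isomorphism characterization, Riemann-Roch on $\mathbb{P}^{2}$ gives
\[\chi(EndE(-1))=(n-1)c_{1}(E)^{2}-2n\,c_{2}(E).\]
HYM polystability and negative slope force $H^{0}(EndE(-1))=0$, and Serre duality gives $H^{2}(EndE(-1))\cong H^{0}(EndE(-4))^{\star}=0$; hence
\[h^{1}(EndE(-1))=2nc_{2}-(n-1)c_{1}^{2}=\Delta(E),\]
the Bogomolov discriminant. Since $\boxminus$ is an injection from the $3$-dimensional $V$, it is an isomorphism exactly when $\Delta(E)=3$. For $E=T^{1,0}\mathbb{P}^{2}(k)$, the Chern classes $c_{1}=(3+2k)H$ and $c_{2}=(3+3k+k^{2})H^{2}$ give $\Delta=3$ by direct computation. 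Conversely, stable bundles on $\mathbb{P}^{2}$ obey the Bogomolov-type bound $\Delta(E)\ge n^{2}-1$ (from nonnegativity of $\dim H^{1}$ of the traceless endomorphism bundle at a smooth moduli point), so $\Delta=3$ forces $n=2$; the zero-dimensional moduli of rank-$2$ stable bundles on $\mathbb{P}^{2}$ with $\Delta=3$ coincides exactly with the family $T^{1,0}\mathbb{P}^{2}(k)$. The main obstacle I anticipate is the injectivity step: reducing pointwise kernel membership to $Y=0$ uses generic full rank of the $EndE$-valued contraction, which requires controlling the degenerate locus of $\tilde{F}^{0}_{A}$ carefully. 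A secondary difficulty is pinning down rank-$2$ stable $\Delta=3$ bundles as $T^{1,0}\mathbb{P}^{2}(k)$, typically via the Beilinson resolution or a direct construction from the zero locus of a section.
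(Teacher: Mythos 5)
Your outline breaks down at the first and most important step: the claim that scale-invariant semi-basic forms ``correspond exactly to $Eigen_{-1}P$'' is not true and cannot substitute for verifying the eigen-equation. $Eigen_{-1}P$ is a finite-dimensional eigenspace of the link operator $P$, whereas scale-invariant semi-basic $adE$-valued $1$-forms form an infinite-dimensional space; homogeneity only fixes the candidate indicial root, it does not show that the restriction of $r(Y\lrcorner F^{0}_{A})$ to $\mathbb{S}^{5}$ satisfies $P\phi=-\phi$. The paper's proof does exactly the missing work: it first shows $r(Y\lrcorner F^{0}_{A})$ is $d_{0}$-co-closed (Bianchi identity together with the HYM condition $F^{0}_{A}\lrcorner\omega_{\mathbb{C}^{3}}=0$, applied to the constant holomorphic and anti-holomorphic coordinate vectors), then computes the Reeb Lie derivative $L_{\upsilon}(rY\lrcorner F^{0}_{A})=J_{0}(rY\lrcorner F^{0}_{A})$, hence $L_{\upsilon}[J_{0}(rY\lrcorner F^{0}_{A})]=-rY\lrcorner F^{0}_{A}$, which is precisely the eigen-equation via the formula for $P$ in \cite[Lemma 5.3]{W}. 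None of this verification appears in your proposal, and without it the map $\boxminus$ is not even defined into $Eigen_{-1}P$.

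There are two further gaps. For injectivity you contract only the $(1,0)$-part and invoke ``generic full rank $2$'' of $v\mapsto v\lrcorner F^{0}_{A}$, which you yourself flag as the main obstacle: a nonzero primitive $(1,1)$-form can be annihilated by a $(1,0)$-vector pointwise (e.g.\ $\phi\,dz^{1}\wedge d\bar{z}^{2}$ is killed by $\partial/\partial z^{2}$), so the genericity claim needs a proof you do not supply. The paper avoids this entirely with real linear algebra: the pullback of $F^{0}_{A}$ is $\star_{D^{\star}}$-anti-self-dual on the contact distribution, and a nonzero real vector contracted into a nonzero anti-self-dual $2$-form is never zero; combined with Fact \ref{fact rY has non-vanishing D component on a dense open set} (the $D$-component of $rY$ is nonvanishing on a dense open set) this forces $F^{0}_{A}\equiv 0$, contradicting non-projective-flatness. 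Finally, in the Riemann--Roch step you pass directly to stable bundles, but HYM only gives polystability; the reduction to the stable case (ruling out nontrivial splittings via $h^{0}(EndE)\le m^{2}$ from Lemma \ref{lem Homorphism} and the inequality $n^{2}\le 3+m^{2}$, as in Step 1 of Lemma \ref{cor rigidity of bundle on P2}) and the identification of the normalized rank-$2$ bundle with $c_{1}=-1$, $c_{2}=1$ as $(T^{1,0}\mathbb{P}^{2})(-2)$ (the paper cites Mukai) are left unaddressed or merely deferred in your sketch.
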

A cohomology class in  $Range \boxminus\subset H^{1}[\mathbb{P}^{2},(EndE)(-1)]$,  in view of \cite{Atiyah}, is called an Atiyah class. The same term applies to an element in $Range \boxminus\subset Eigen_{-1}P$ via the complex isomorphism in \cite[Theorem A and Proposition 8.2]{W}.
\begin{Notation}\label{Notation Atiyah class} Denote $Range\boxminus$ by 
$$\{\textrm{Atiyah Classes}\}|_{Eigen_{-1}P}.$$
Suppressing the subscript,  this is the space on the ``denominator" in \eqref{equ def ess obstruction}. 
\end{Notation}
We need two facts for Proposition \ref{prop atiyah}. 
\begin{lem}\label{equ closed contraction}  Let $F^{0}_{A}$ be the curvature of the projective connection induced by a Hermitian connection 
over a K\"ahler surface. Suppose $F^{0}_{A}$ is $(1,1)$. 
\begin{itemize}\item If $X$ is a holomorphic $(1,0)-$vector field, then $X\lrcorner F^{0}_{A}$ is $\overline{\partial}_{A}-$closed.
\item If $X$ is an  antiholomorphic $(0,1)-$vector field, then $X\lrcorner F^{0}_{A}$ is $\partial_{A}-$closed.
\end{itemize}
Consequently, in either  case, $d_{A}(X\lrcorner F^{0}_{A})$ is $(1,1)$.
\end{lem}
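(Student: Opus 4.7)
The plan is to deduce both bullets from a Cartan-type commutator identity and then obtain the consequence by pure type counting. The starting point is the second Bianchi identity $d_A F^0_A = 0$: decomposing $d_A = \partial_A + \bar\partial_A$ and comparing bidegrees with the hypothesis that $F^0_A$ is of type $(1,1)$ gives $\partial_A F^0_A = 0$ and $\bar\partial_A F^0_A = 0$ separately.

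The key mechanism is the twisted commutator identity
\[
\bar\partial_A \iota_X + \iota_X \bar\partial_A = 0
\]
for $X$ a holomorphic $(1,0)$ vector field, and dually $\partial_A \iota_X + \iota_X \partial_A = 0$ for $X$ antiholomorphic. In the untwisted case this is classical: Cartan's formula $L_X = d\iota_X + \iota_X d$ decomposes by bidegree, and the bidegree-shifting piece vanishes precisely because a (anti)holomorphic vector field preserves bidegree. To promote the identity to $\mathrm{End}\,E$-valued forms I would verify it on a decomposable form $s\cdot\alpha$: expanding via the Leibniz rules for $\bar\partial_A$ and the anti-Leibniz rule for $\iota_X$, the cross terms involving $\iota_X(\bar\partial_A s)$ drop out because $\bar\partial_A s$ has type $(0,1)$ while $X$ has type $(1,0)$, leaving exactly the scalar identity applied to $\alpha$.

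Granted this, each bullet becomes a one-liner: applying the commutator to $F^0_A$ and invoking Bianchi yields $\bar\partial_A(X\lrcorner F^0_A) = -\,X\lrcorner \bar\partial_A F^0_A = 0$ in the holomorphic case, and symmetrically $\partial_A(X\lrcorner F^0_A) = 0$ in the antiholomorphic case. For the final consequence, observe that $X\lrcorner F^0_A$ has pure bidegree $(0,1)$ or $(1,0)$ depending on which case we are in, so $d_A(X\lrcorner F^0_A)$ a priori has components only in bidegrees $(1,1)$ and $(0,2)$, respectively $(1,1)$ and $(2,0)$; the vanishing just established kills the non-$(1,1)$ component, and one concludes that $d_A(X\lrcorner F^0_A)$ is of pure type $(1,1)$. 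The only non-routine point is promoting the Cartan commutator identity to the twisted setting, and this is a short Leibniz computation rather than a substantive obstacle.
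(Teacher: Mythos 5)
Your proposal is correct. The twisted Cartan identity $\bar\partial_A\iota_X+\iota_X\bar\partial_A=0$ for a holomorphic $(1,0)$ field does hold: the scalar version follows, as you say, because $L_X=d\iota_X+\iota_Xd$ preserves bidegree when $X$ is holomorphic, so its $(-1,+1)$ component vanishes; and your promotion to $\mathrm{End}E$-valued forms is sound, since on $s\cdot\alpha$ the only new term is $(\iota_X\bar\partial_As)\wedge\alpha$, which dies by type, while the two copies of $(\bar\partial_As)\wedge\iota_X\alpha$ cancel by the anti-Leibniz sign. Combined with $\bar\partial_AF^0_A=0$ (the $(1,2)$ piece of Bianchi, which applies to the projective curvature because $F^0_A$ differs from $F_A$ by a central closed $2$-form) and the type count at the end, the lemma follows.

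The paper takes a more pedestrian route with the same two inputs: it works in a K\"ahler geodesic coordinate and computes the single component $(X^iF^0_{i\bar1})_{\bar2}-(X^iF^0_{i\bar2})_{\bar1}=X^i\bigl(F^0_{i\bar1,\bar2}-F^0_{i\bar2,\bar1}\bigr)=0$, the first equality because $X$ is holomorphic (no derivatives land on $X^i$) and the second by Bianchi together with $F^0_A$ being $(1,1)$; the antiholomorphic case and the final type-counting are handled exactly as you do. The difference is packaging rather than substance: your commutator identity is coordinate-free, makes visible that the K\"ahler hypothesis is irrelevant (any complex manifold would do, the normal coordinates in the paper are only a convenience), and isolates a reusable general fact, whereas the paper's component calculation is shorter to write and avoids having to verify the twisted Leibniz bookkeeping. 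Either argument is complete.
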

\begin{proof}
It suffices to prove it for  holomorphic $(1,0)$ vector fields, it is similar for  anti-holomorphic $(0,1)$ vector fields. Under a K\"ahler geodesic coordinate, we calculate
\begin{equation}
(X^{i}F^{0}_{i\overline{1}})_{\bar{2}}-(X^{i}F^{0}_{i\overline{2}})_{\bar{1}}=X^{i}F^{0}_{i\overline{1},\bar{2}}-X^{i}F^{0}_{i\overline{2},\bar{1}}=0,
\end{equation}
where the first equal sign holds because $X$ is holomorphic $(1,0)$, the second is by Bianchi identity for $F^{0}_{A}$ and that  the curvature is $(1,1)$. \end{proof}

We henceforth suppress the connection in the derivatives. The other fact is the following. 
\begin{lem}\label{lem contraction is closed}For any constant vector $Y\in \mathbb{R}^{6}$, $d_{0}(rY\lrcorner F^{0}_{A})$ is $(1,1)$. Consequently, $$[d_{0}(rY\lrcorner F^{0}_{A})]\lrcorner G=[d_{0}(rY\lrcorner F^{0}_{A})]\lrcorner H=0.$$ 
\end{lem}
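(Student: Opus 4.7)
My plan is to split $Y = Y^{1,0}+Y^{0,1}$ by complex type, reduce by conjugation to the case of a constant $(1,0)$-vector, and then deduce the $(1,1)$-ness from Lemma \ref{equ closed contraction} together with the Sasakian identity $dr+ir\eta=2\partial r$ on $\mathbb{C}^{3}\setminus O$. The ``consequently'' clause will then follow from the hyperk\"ahler structure on $\wedge^{2}D^{\star}$.

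Concretely, since $F_{A}^{0}$ is of type $(1,1)$, the contraction $Y^{1,0}\lrcorner F_{A}^{0}$ is a $(0,1)$-form and $Y^{0,1}\lrcorner F_{A}^{0}$ is a $(1,0)$-form, so by complex conjugation it suffices to show that the $(0,2)$-component of $d_{0}(rY^{1,0}\lrcorner F_{A}^{0})$ vanishes. Working first with the covariant exterior derivative $d_{A}$ on $\mathbb{C}^{3}\setminus O$ and combining the covariant Leibniz rule with Lemma \ref{equ closed contraction} (which gives $\overline{\partial}_{A}(Y^{1,0}\lrcorner F_{A}^{0})=0$), I get
\[
d_{A}(rY^{1,0}\lrcorner F_{A}^{0}) = dr\wedge(Y^{1,0}\lrcorner F_{A}^{0}) + r\,\partial_{A}(Y^{1,0}\lrcorner F_{A}^{0}).
\]
Converting to the transverse operator $d_{0}=d-\eta\wedge L_{\upsilon}$ requires the Reeb eigenvalue of the contracted form: since $F_{A}^{0}$ and $r$ are $\upsilon$-invariant and $L_{\upsilon}Y^{1,0}=-iY^{1,0}$ for a constant holomorphic vector field (a one-line bracket calculation), one has $L_{\upsilon}^{A}(rY^{1,0}\lrcorner F_{A}^{0}) = -i\,rY^{1,0}\lrcorner F_{A}^{0}$, so that
\[
d_{0}(rY^{1,0}\lrcorner F_{A}^{0}) = (dr+ir\eta)\wedge(Y^{1,0}\lrcorner F_{A}^{0}) + r\,\partial_{A}(Y^{1,0}\lrcorner F_{A}^{0}).
\]
The decisive Sasakian identity $dr+ir\eta = 2\partial r$, verified directly from $\eta = \frac{i}{2r^{2}}(z^{i}d\bar{z}^{i}-\bar{z}^{i}dz^{i})$, then makes the first wedge product of type $(1,0)\wedge(0,1)=(1,1)$; the second summand is manifestly $(1,1)$. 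Complex conjugation covers the $Y^{0,1}$-piece.

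For the ``consequently'' clause, I would invoke the hyperk\"ahler structure induced on $\wedge^{2}D^{\star}$ by $\{J_{0},J_{H},J_{G}\}$: the forms $G$ and $H$ are (up to constants) the K\"ahler forms of $J_{H}$ and $J_{G}$, which are of $J_{0}$-type $(2,0)+(0,2)$, hence self-dual and metric-orthogonal to any $J_{0}$-$(1,1)$-form. The metric contractions $\cdot\lrcorner G$ and $\cdot\lrcorner H$ are precisely these pairings, so they vanish automatically once $(1,1)$-ness is established. The main obstacle I anticipate is purely bookkeeping: all forms are $adE$-valued so every ``$d$'' must be twisted, and getting the correct Reeb weight of $Y^{1,0}\lrcorner F_{A}^{0}$ (which is what produces the key combination $dr+ir\eta$) uses crucially that the model connection $A$ descends from $\mathbb{P}^{2}$ and is therefore $\upsilon$-invariant.
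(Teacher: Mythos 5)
Your proposal is correct, but it takes a genuinely different route from the paper. The paper restricts to the Zariski-open set $\{Z_{0}Z_{1}Z_{2}\neq 0\}$, rewrites $r\frac{\partial}{\partial Z_{0}}=\frac{r}{Z_{0}}\cdot\bigl(Z_{0}\frac{\partial}{\partial Z_{0}}\bigr)$, pushes the scaling-invariant field $Z_{0}\frac{\partial}{\partial Z_{0}}$ down to a holomorphic vector field on $\mathbb{P}^{2}$, quotes an identity from \cite{W} to see that $d_{0}\bigl(\frac{r}{Z_{0}}\bigr)$ is $(1,0)$, applies Lemma \ref{equ closed contraction} downstairs on the surface $\mathbb{P}^{2}$, and finishes by continuity. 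You instead stay upstairs on $\mathbb{C}^{3}\setminus O$ with the constant field itself: you apply Lemma \ref{equ closed contraction} directly to $Y^{1,0}$ (strictly, this needs the dimension-three version of that lemma, but the Bianchi computation in its proof is dimension-independent, so this is a trivial extension worth one sentence), compute the Reeb weight $L_{\upsilon}Y^{1,0}=-\sqrt{-1}\,Y^{1,0}$ — which is exactly the paper's own Lemma \ref{formula Lxi is -J}, so your sign convention is consistent — and convert $d$ to $d_{0}$ so that the extra term organizes into $(dr+\sqrt{-1}\,r\eta)\wedge(Y^{1,0}\lrcorner F^{0}_{A})=2\partial r\wedge(Y^{1,0}\lrcorner F^{0}_{A})$, a $(1,0)\wedge(0,1)$ product. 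The sign here is the one delicate point (the opposite Reeb weight would produce $2\overline{\partial}r$ and a $(0,2)$ term, killing the statement), and your verification of $\eta$ and of the eigenvalue matches the paper's conventions, so the argument closes. What each approach buys: yours avoids the dense-open-set/continuity step and the external citation, and it isolates the real mechanism (the Reeb weight of the contracted form matching the cone weight $r$); the paper's factorization trick avoids any discussion of weights by transferring the whole computation to $\mathbb{P}^{2}$, where Lemma \ref{equ closed contraction} applies verbatim. Your treatment of the ``consequently'' clause — $H$ and $G$ are of $J_{0}$-type $(2,0)+(0,2)$, hence pair to zero with any $(1,1)$-form — is the intended one; only note the harmless slip in which you paired $G$ with $J_{H}$ and $H$ with $J_{G}$ (the paper's convention is $H\leftrightarrow J_{H}$, $G\leftrightarrow J_{G}$), and that ``self-dual'' is not what is doing the work there, type-orthogonality is.
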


\begin{proof}Let $Z_{0},\ Z_{1},\ Z_{2}$ be the complex coordinates of $\mathbb{C}^{3}$.  It suffices to prove it for the complexified version  for the constant holomorphic vectors 
\begin{equation}\label{equ holo vect fields}\frac{\partial}{\partial Z_{0}},\ \frac{\partial}{\partial Z_{1}},\ \frac{\partial}{\partial Z_{2}}\end{equation} and anti-holomorphic vectors\begin{equation}\label{equ anti holo vect fields}\frac{\partial}{\partial \overline{Z}_{0}},\ \frac{\partial}{\partial \overline{Z}_{1}},\ \frac{\partial}{\partial \overline{Z}_{2}}.\end{equation}
We only do it for $\frac{\partial}{\partial Z_{0}}$ on the dense open set \begin{equation}\label{equ V zariski}V_{\mathbb{C}^{3}\setminus O}=\{Z_{0}\neq 0,\ Z_{1}\neq 0,\ Z_{2}\neq 0\}\subset \mathbb{C}^{3}\setminus O.\end{equation} Then it follows by  continuity.  The proof for the other five vectors are similar. 

We calculate
 \begin{eqnarray}& &d_{0}(r\frac{\partial}{\partial Z_{0}}\lrcorner F^{0}_{A})=d_{0}(\frac{r}{Z_{0}}Z_{0}\frac{\partial}{\partial Z_{0}}\lrcorner F^{0}_{A})\nonumber
 \\&=&\{d_{0}(\frac{r}{Z_{0}})\wedge [\pi_{\star}(Z_{0}\frac{\partial}{\partial Z_{0}})]\lrcorner F^{0}_{A}\}+\frac{r}{Z_{0}}\cdot d_{\mathbb{P}^{2}}([\pi_{\star}(Z_{0}\frac{\partial}{\partial Z_{0}})]\lrcorner F^{0}_{A})
 \label{equ 1 lem contraction is closed}
 \end{eqnarray}
 
 The radius $r$ equals $1$ on $\mathbb{S}^{5}$. According to an identity in \cite[Proof of Lemma 8.7]{W}, $d_{0}(\frac{r}{Z_{0}})=d_{0}(\frac{1}{Z_{0}})$ is $(1,0)$. Since $[\pi_{\star}(Z_{0}\frac{\partial}{\partial Z_{0}})]\lrcorner F^{0}_{A}$ is $(0,1)$,  the first term in \eqref{equ 1 lem contraction is closed} is $(1,1)$. So is the second term by Lemma \ref{equ closed contraction}. 
 \end{proof}
 \subsection{Riemann-Roch formula}
 The map $\boxminus$ being  surjective implies rigidity of the connection. 
\begin{lem}\label{cor rigidity of bundle on P2}Let  $(E,A)\rightarrow \mathbb{P}^{2}$ be a non-projectively flat Hermitian Yang-Mills bundle  with rank $n\geq 2$. Suppose  $c_{2}(EndE)\leq 3.$
Then $n=2$. Moreover,  as a holomorphic bundle, $(E,\overline{\partial}_{A})$ is isomorphic to the twisted Fubini-Study connection on $(T^{1,0}\mathbb{P}^{2})(k)$ for some integer $k$. In particular,   the equality $c_{2}(EndE)=3$ holds.
\end{lem}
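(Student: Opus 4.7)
My plan is to prove the lemma in three Riemann--Roch steps, using the polystability furnished by the HYM condition together with the injectivity of $\boxminus$ from Proposition \ref{prop atiyah}.

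First I would compute $h^1[\mathbb{P}^2, (EndE)(-1)]$. Using $c_1(EndE) = 0$ and a Chern-character calculation on the surface, one gets $\chi((EndE)(-1)) = -c_2(EndE)$. The HYM condition makes $EndE$ polystable of slope $0$, so both $(EndE)(-1)$ and $(EndE)(-2)$ have strictly negative slope; any nonzero section would yield an injection $\mathcal{O}(j) \hookrightarrow EndE$ with $j \geq 1$, violating slope semistability. Hence $h^0 = 0$ for both, and Serre duality gives $h^2((EndE)(-1)) = h^0((EndE)(-2))^* = 0$. Combining $h^1[\mathbb{P}^2, (EndE)(-1)] = c_2(EndE)$ with the complex injection $\boxminus: \mathbb{C}^3 \hookrightarrow H^1[\mathbb{P}^2, (EndE)(-1)]$ and the hypothesis $c_2(EndE) \leq 3$ forces $c_2(EndE) = 3$ and makes $\boxminus$ an isomorphism.

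Next I would apply Riemann--Roch to $EndE$ itself to pin down the rank. One gets $\chi(EndE) = n^2 - 3$, and $h^2(EndE) = h^0(EndE(-3))^* = 0$ again by polystability, so
\[ h^1(EndE) = h^0(EndE) - n^2 + 3 \geq 0. \]
Writing the polystable decomposition $E \cong \bigoplus_i E_i^{\oplus m_i}$ into pairwise non-isomorphic simple HYM summands, one has $h^0(EndE) = \sum_i m_i^2$ and $n = \sum_i m_i \, \mathrm{rk}(E_i)$. The non-projectively-flat hypothesis excludes $E \cong L^{\oplus n}$, and a short case analysis then shows that $\sum_i m_i^2 \geq n^2 - 3$ fails for every other polystable decomposition with $n \geq 3$, forcing $n = 2$ with $E$ stable.

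Finally, with $n = 2$ and $c_2(EndE) = 4 c_2(E) - c_1(E)^2 = 3$, I would twist $E$ by a line bundle $\mathcal{O}(k)$ normalizing $c_1(E(-k)) \in \{0, h\}$; the integrality of $c_2$ singles out $c_1 = h, \; c_2 = 1$. The classical classification of stable rank-$2$ bundles on $\mathbb{P}^2$ with these Chern classes (the moduli is a single reduced point, realized via the Euler sequence $0 \to \mathcal{O}(-1) \to \mathcal{O}^3 \to T^{1,0}\mathbb{P}^2(-1) \to 0$) identifies the unique such bundle as $T^{1,0}\mathbb{P}^2(-1)$. Consequently $(E, \bar{\partial}_A)$ is holomorphically isomorphic to $T^{1,0}\mathbb{P}^2(k)$, and the HYM condition pins down the connection as the twisted Fubini--Study one up to smooth bundle isomorphism. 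The main obstacle will be the combinatorial case analysis ruling out polystable-but-not-stable decompositions in the rank step, where one must carefully track how $\sum_i m_i^2$ grows relative to $n^2 - 3$ as the simple summands vary in rank; everything else is either Riemann--Roch bookkeeping or an invocation of classical $\mathbb{P}^2$ moduli results.
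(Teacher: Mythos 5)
Your argument is correct, but it takes a genuinely different route from the paper in the step that pins down the Chern number. You force $c_{2}(EndE)=3$ at the outset by computing $\chi[(EndE)(-1)]=-c_{2}(EndE)$, killing $h^{0}$ and $h^{2}$ by slope reasons, and then playing $h^{1}[\mathbb{P}^{2},(EndE)(-1)]=c_{2}(EndE)\leq 3$ against the complex injection $\boxminus$; the paper never uses $\boxminus$ inside this lemma. Instead it argues purely algebro-geometrically: a mod $4$ congruence shows $4c_{2}(E)-c_{1}^{2}(E)\in\{0,3\}$, and the value $0$ is excluded because equality in the Bogomolov (L\"ubke) inequality would make the stable bundle projectively flat. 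Your shortcut buys a cleaner case analysis (no Bogomolov equality case) at the price of importing the Sasakian/Atiyah-class input; this is legitimate and non-circular provided you invoke only the injectivity of $\boxminus$ (Steps 1--3 of the proof of Proposition \ref{prop atiyah}) together with the identification $Eigen_{-1}P\cong_{\mathbb{C}}H^{1}[\mathbb{P}^{2},(EndE)(-1)]$ from \cite{W}, since the ``isomorphism if and only if'' clause of Proposition \ref{prop atiyah} is itself deduced from the present lemma. Your rank step is essentially the paper's: Riemann--Roch for $EndE$ plus the bound $h^{0}(EndE)\leq m^{2}$ (there via Lemma \ref{lem Homorphism}, in your version the exact count $\sum_{i}m_{i}^{2}$ for pairwise non-isomorphic stable summands), and the counting you flag as the main obstacle does close: if some summand has rank $\geq 2$ then $n\geq M+1$ with $M=\sum_{i}m_{i}$, so $n^{2}-3\leq M^{2}\leq (n-1)^{2}$ forces $n=2$ with a single stable summand, while the all-line-bundle case gives $E\cong L^{\oplus n}$ (Picard group $\mathbb{Z}$), i.e.\ projectively flat, which is excluded. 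For the final identification the paper cites topological classification plus Mukai for holomorphic rigidity, whereas you cite the classical uniqueness of the stable rank-$2$ bundle on $\mathbb{P}^{2}$ with normalized $c_{1},c_{2}$ realized by the Euler sequence; these are interchangeable, and your closing remark that the HYM connection is then determined up to isomorphism by uniqueness of Hermitian Yang--Mills connections in a stable bundle is the right way to pass from the holomorphic statement to the connection statement.
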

In the above case,  we recall that $c_{2}(EndE)=2nc_{2}(E)-(n-1)c^2_{1}(E)$.
\begin{proof}[Proof of Lemma \ref{cor rigidity of bundle on P2}]
The Hermitian Yang-Mills condition implies poly-stability i.e.  $$E=E_{1}\oplus...\oplus E_{m}$$ where the $m$ components are stable bundles of the same slope.  Any (holomorphic) endomorphism of $E$  is determined by the induced homomorphism $$E_{i}\rightarrow E_{j}\ \ \textrm{for any}\ \ i,\ j=1,...,m. $$Then Lemma \ref{lem Homorphism} below yields  a natural complex injection $$H^{0}[\mathbb{P}^{2},EndE]\rightarrow gl(m,\mathbb{C}).$$  This implies   $h^{0}(\mathbb{P}^{2},EndE)\leq m^{2}$. 

Step $1$: We show that $E$ must be stable and rank$E=2$ i.e. $m=1$.  Because   $$h^{0}[\mathbb{P}^{2},(EndE)(-3)]=0,$$ the cohomology formula (for example, see  \cite[Lemma 18.10]{W}) implies 
\begin{equation}\nonumber
0\leq h^{1}[\mathbb{P}^{2},EndE]=2nc_{2}(E)-(n-1)c^2_{1}(E)+(m^{2}-n^{2})\leq m^{2}+3-n^{2}.
\end{equation}
Hence 
\begin{equation}\label{equ rank and number of summands} n^{2}\leq 3+m^{2}.\end{equation} But $$n=n_{1}+...+n_{m}$$ is the sum of the ranks of the sub-bundles. Then either 
\begin{itemize}\item $n_{1}=...=n_{m}=1$,
 \item or $m=1$.
\end{itemize} This is because if $m\geq 2$ and there is at least one summand with rank $\geq 2$, the inequality
$$n^{2}\geq (m+1)^{2}=m^{2}+2m+1>3+m^{2}$$
contradicts  \eqref{equ rank and number of summands}. The first bullet point condition implies $E$ is projectively flat, which contradicts our assumption. The second says 
 $E$ is stable therefore rank$E=2$ by \eqref{equ rank and number of summands}.

Step $2$: It suffices to show $E$ must be a twisted tangent bundle using (the other conditions and) $$0\leq 4c_{2}(E)-c^2_{1}(E)\leq 3.$$ Because the Chern numbers $c_{1}(E)$ and $c_{2}(E)$ are both integers, $4c_{2}(E)-c^2_{1}(E)$ can not be $1$ or $2$ mod $4$.  This is because in mod 4 congruence classes,  $4c_{2}(E)=0$  and the square of an integer (including $c^{2}_{1}(E)$) does  not equal $2$ or $3$. Hence $$4c_{2}(E)-c^2_{1}(E)=\ 3\ \textrm{or}\ 0.$$
  
Case $1$. Suppose $4c_{2}(E)-c^2_{1}(E)=3$. Then $c_{1}(E)$ must be odd. Let $E(k_{E})$ be the normalization of $E$ such that  $
c_{1}[E(k_{E})]=-1,$
we have  $c_{2}[E(k_{E})]=1$. Then $E(k_{E})$ must be topologically isomorphic to $(T^{1,0}\mathbb{P}^{2})(-2)$. Mukai  \cite{Mukai} shows that they must also be  isomorphic as holomorphic bundles.

Case $2$. Suppose $4c_{2}(E)-c^2_{1}(E)=0$.  The equality in Bogomolov inequality is attained. It must be projectively flat, but $E$ is stable with rank $\geq 2$. This is a contradiction.

The above says $E$ must be isomorphic to  $(T^{1,0}\mathbb{P}^{2})(k)$ as  holomorphic vector bundles.  \end{proof}
 Postscript: The reason $c^{2}_{1}(E)$ is a squared integer is that the Picard group of $\mathbb{P}^{2}$ is generated by $O(1)$ and the Chern number $c^{2}_{1}[O(1)]$ is equal to $1$ i.e.
$$\int_{\mathbb{P}^{2}}c_{1}[O(1)]\wedge c_{1}[O(1)]=1\ \ \textrm{as Chern number}.$$ This implies $c^{2}_{1}(E)=(deg E)^{2}$ where $det E=O(deg E)$. 

\subsection{Proof of Proposition \ref{prop atiyah}}
\textbf{Step 1}:   $r(Y\lrcorner F^{0}_{A})$ is  $d_{0}-$co-closed.


We first show it is $d_{\mathbb{C}^{3}}$ co-closed. Similarly to Lemma \ref{lem contraction is closed}, we show the complexified version for the holomorphic and anti-holomorphic vector fields \eqref{equ  holo vect fields}, \eqref{equ anti holo vect fields}. This is straight-forward because the pullback $F^{0}_{A}$ is $(1,1)$ on $\mathbb{C}^{3}\setminus O$, and the projective Hermitian Yang-Mills condition $F^{0}_{A}\lrcorner \frac{d\eta}{2}=0$ on $\mathbb{P}^{2}$ implies $F^{0}_{A}\lrcorner \omega_{\mathbb{C}^{3}}=0$ as pullback. By Bianchi identity, for any $i=0,1,2$, $F^{0}_{A,i\bar{j},j}=0$ on $\mathbb{C}^{3}$. Hence there is a constant $c_{0}$ such that 
\begin{eqnarray}
& &\nonumber d^{\star_{\mathbb{C}^{3}}}_{\mathbb{C}^{3}}( r\frac{\partial }{\partial Z_{i}}\lrcorner F^{0}_{A})= c_{0}(r F^{0}_{A, i\bar{j}})_{j}= c_{0}r_{j} F^{0}_{A, i\bar{j}}+c_{0}(r F^{0}_{A, i\bar{j},j})=  c_{0} F^{0}_{A}(\frac{\partial }{\partial Z_{0}},\frac{\partial}{\partial r})\nonumber
\\&=& 0.
\end{eqnarray}
Because $d^{\star_{\mathbb{C}^{3}}}_{\mathbb{C}^{3}}=\frac{d^{\star_{D^{\star}}}_0}{r^{2}}$ on semi-basic $1-$forms pullback from $\mathbb{S}^{5}$, we find $$d^{\star_{D^{\star}}}_0 (r\frac{\partial}{\partial Z_{i}}\lrcorner F^{0}_{A})=0. $$ 
Similarly proof yields the following for any $i=0,\ 1,\ 2$.  $$d^{\star_{D^{\star}}}_0 (r\frac{\partial}{\partial \overline{Z}_{i}}\lrcorner F^{0}_{A})=0. $$ 
Hence for any constant vector $Y\in \mathbb{R}^{6}$, we have 
 $$d^{\star_{D^{\star}}}_0 (rY\lrcorner F^{0}_{A})=0. $$ 

\textbf{Step 2}: $r(Y\lrcorner F^{0}_{A})$ is an eigen-section of $P$ of eigenvalue $-1$. 

It is semi-basic.  Because $F^{0}_{A_{0}}$ is $(1,1)$ on $\mathbb{C}^{3}\setminus O$ and $\mathbb{R}^{6}$, the $J_{\mathbb{C}^{3}}$ and $J_{0}$ invariance of $F^{0}_{A_{0}}$ tells us that \begin{equation} \label{equ J Atiyah class}[J_{\mathbb{C}^{3}}(Y)]\lrcorner F^{0}_{A_{0}}=J_{\mathbb{C}^{3}}(Y\lrcorner F^{0}_{A_{0}})=J_{0}(Y\lrcorner F^{0}_{A_{0}})\ \textrm{for any}\ Y\in \mathbb{C}^{3}.
 \end{equation}
 We used that on semi-basic vectors and forms, $J_{\mathbb{C}^{3}}$ coincides with $J_{0}$ (see \cite[Appendix]{W}).
Consequently, $$d^{\star_{D^{\star}}}_0 J_{0}(rY\lrcorner F^{0}_{A})=d^{\star_{D^{\star}}}_0 [rJ_{\mathbb{C}^{3}}(Y)\lrcorner F^{0}_{A}]=0$$ for any $Y\in \mathbb{R}^{6}$ as well.  The Lie derivative in the Reeb vector field  is  
\begin{equation}
L_{\upsilon}(rY\lrcorner F^{0}_{A})=rL_{\upsilon}(Y\lrcorner F^{0}_{A})=r(J_{\mathbb{C}^{3}}Y)\lrcorner F^{0}_{A}=rJ_{\mathbb{C}^{3}}(Y\lrcorner F^{0}_{A})=J_{0}(rY\lrcorner F^{0}_{A}).
\end{equation}
Apply $J_{0}$ to both hand sides and using that $L_{\upsilon}J_{0}=J_{0}L_{\upsilon}$, we find
\begin{equation}
L_{\upsilon}[J_{0}(rY\lrcorner F^{0}_{A})]=-rY\lrcorner F^{0}_{A}.
\end{equation}
Via the formula for $P$ in \cite[Lemma 5.3]{W}, the above means $rY\lrcorner F^{0}_{A}$ is an eigen-section of $P$ of eigenvalue $-1$.  
 It defines the map $$\boxminus: \mathbb{R}^{6}\rightarrow Eigen_{-1}P\ \ \textrm{via}\ \ \ \boxminus(Y)\triangleq rY\lrcorner F^{0}_{A}.$$ 

\textbf{Step 3}. $\boxminus$ is injective.  

Let  $p$ be a point on  $\mathbb{S}^{5}$ at which $v=(rY)^{\parallel_{D}}$  is nonzero (see Fact \ref{fact rY has non-vanishing D component on a dense open set} below). Then we normalize  it via $e_{1}=\frac{v}{|v|}$, and complete it into an orthonormal frame $(e_{1},\ e_{2},\ e_{3},\ e_{4})$ for the contact distribution $D$ at $p$. That $F^{0}_{A}$ is anti self-dual means 
\begin{equation}F^{0}_{A}=F^{0}_{A,I}(e^{12}-e^{34})+F^{0}_{A,II}(e^{13}-e^{42})+F^{0}_{A,III}(e^{14}-e^{23}).
\end{equation}
The condition $e_{1}\lrcorner F^{0}_{A}=0$ at $p$ implies that $0=F^{0}_{A,I}e^{2}+F^{0}_{A,II}e^{3}+F^{0}_{A,III}e^{4}.$ This in turn implies $F^{0}_{A,I}=F^{0}_{A,II}=F^{0}_{A,III}=0$ i.e. $F^{0}_{A}=0$ at $p$. Because $v$ is non-zero on a dense open set on $\mathbb{S}^{5}$, $F^{0}_{A}=0$ on the same set. By continuity of $F^{0}_{A}$, it vanishes everywhere on $\mathbb{S}^{5}$. 

When $E$ is a twisted tangent bundle of $\mathbb{P}^{2}$, by Lemma \ref{cor rigidity of bundle on P2}, the injection $\boxminus$ is  an isomorphism since the dimension of the domain equals the dimension of the range. 
The proof is complete.

\section{Formula of the auxiliary operator\label{Sect Aux}}
The Atiyah classes originally defined in  $Eigen_{-1}P$ 
can also be defined in  $Eigen_{-2}P$ via the isometry $\underline{T}$ i.e. 
$$\{\textrm{Atiyah classes}\}|_{Eigen_{-2}P} \triangleq \underline{T}[\{\textrm{Atiyah classes}\}|_{Eigen_{-1}P}].$$
The desired formula involves both.

 \begin{prop}\label{lem equ for X vs equ for u and v} Let $X\in C^{1}[(B(R)\setminus O)\times \mathbb{S}^{1}]$ be a  vector field of the form \eqref{equ vector field}, $0<R\leq \infty$.   The following holds therein. 
 \begin{equation}\label{eq Aux}Aux(X)=-\Sigma_{i=1}^{6}\{\frac{\partial X_{i}}{\partial s} \cdot  [(J_{\mathbb{C}^{3}}e_{i})\lrcorner F^{0}_{A}]+\frac{\partial X_{i}}{\partial r} \cdot  J_{H}(e_{i}\lrcorner F^{0}_{A})\}.\end{equation}
 Consequently, $RangeAux$ lies in the span (by continuous functions only of $r,s$  on the same tubular ball) of Atiyah classes in $Eigen_{-1}P$ and $Eigen_{-2}P$.
  \end{prop}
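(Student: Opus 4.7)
The plan is to work inside a punched tubular ball where the $G_2$-structure is standard, so $d\psi_{euc}=0$ and $d\phi_{euc}=0$. Under this simplification the auxiliary operator collapses to $Aux(X)=\star[F^{0}_{A}\wedge d(X\lrcorner\psi_{euc})]$, and I would compute this explicitly by decomposing $X$ along its defining frame and using only: (a) the product structure $\psi_{euc}=\tfrac{1}{2}\omega^{2}-ds\wedge\mathrm{Re}\,\Omega$ coming from $\mathbb{R}^{7}=\mathbb{C}^{3}\oplus\mathbb{R}_{s}$, (b) the fact that $F^{0}_{A}$, as the pullback of a projective Hermitian Yang-Mills curvature, is a $(1,1)$ primitive form on $\mathbb{C}^{3}\setminus O$, hence anti-self-dual on the contact co-distribution $D^{\star}$, and (c) the ansatz $X_{s},X_{i}=X_{s}(r,s),X_{i}(r,s)$, so that $dX_{s}$ and $dX_{i}$ contain only $dr$ and $ds$.

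First I would compute, term by term, $(X_{s}\partial_{s})\lrcorner\psi_{euc}=-X_{s}\,\mathrm{Re}\,\Omega$ and $(X_{i}e_{i})\lrcorner\psi_{euc}=X_{i}(e_{i}\lrcorner\tfrac{1}{2}\omega^{2})-X_{i}\,ds\wedge(e_{i}\lrcorner\mathrm{Re}\,\Omega)$. Taking exterior derivatives and keeping in mind that $d\omega=0$ and $d\Omega=0$, the only contributions come from $dX_{s}$ and $dX_{i}$, producing four structural terms in $dr\wedge(\cdots)$ and $ds\wedge(\cdots)$. Then I would wedge each term with $F^{0}_{A}$ and apply $\star$. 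Here the anti-self-duality of $F^{0}_{A}$ in $\wedge^{2}D^{\star}$ annihilates many pieces; what survives is exactly the two families $\partial_{s}X_{i}\cdot \star[F^{0}_{A}\wedge dr\wedge\cdots]$ and $\partial_{r}X_{i}\cdot \star[F^{0}_{A}\wedge ds\wedge\cdots]$. A direct identification of the semi-basic $1$-forms produced by $\star$, together with the fact that contraction with a $(1,1)$-form intertwines $J_{\mathbb{C}^{3}}$ (i.e.\ $(J_{\mathbb{C}^{3}}e_{i})\lrcorner F^{0}_{A}=J_{\mathbb{C}^{3}}(e_{i}\lrcorner F^{0}_{A})$, as used in \eqref{equ J Atiyah class}), will recast the first family as $\partial_{s}X_{i}\cdot(J_{\mathbb{C}^{3}}e_{i})\lrcorner F^{0}_{A}$ and the second as $\partial_{r}X_{i}\cdot J_{H}(e_{i}\lrcorner F^{0}_{A})$, with an overall sign $-$. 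The $X_{s}$ terms must contribute, after identification, to the coefficient of $\partial_{s}X_{s}$ and $\partial_{r}X_{s}$ in the same way by setting $X_{7}=X_{s}$ and treating $\partial_{s}$ as an honest $7$-th direction; this is the step that most needs care, but is dictated by the parallel symmetry between $ds$ and the spatial $1$-forms inside $\psi_{euc}$.

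The consequence on $Range\,Aux$ then follows formally. By Proposition \ref{prop atiyah}, multiplication by $r$ turns each $(J_{\mathbb{C}^{3}}e_{i})\lrcorner F^{0}_{A}$ into an element of $\{\textrm{Atiyah classes}\}|_{Eigen_{-1}P}$. For the second family I would use the relation $J_{H}=\underline{T}J_{0}$ coming from $J_{H}J_{0}=J_{G}$ and the identifications $I=J_{0}$, $\underline{T}=-J_{G}$ recalled at the start of Section \ref{section local theory}; combined with $(J_{\mathbb{C}^{3}}e_{i})\lrcorner F^{0}_{A}=J_{0}(e_{i}\lrcorner F^{0}_{A})$, this rewrites $J_{H}(e_{i}\lrcorner F^{0}_{A})=\underline{T}\bigl((J_{\mathbb{C}^{3}}e_{i})\lrcorner F^{0}_{A}\bigr)$, which by the very definition of $\{\textrm{Atiyah classes}\}|_{Eigen_{-2}P}$ lies in that space. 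Since the coefficients $\partial_{r}X_{i},\partial_{s}X_{i}$ are continuous functions of $r$ and $s$ only, this gives the claimed span.

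The main obstacle is the bookkeeping in the second step: decomposing $\star(F^{0}_{A}\wedge dr\wedge \eta)$ and $\star(F^{0}_{A}\wedge ds\wedge \eta)$ for each semi-basic $1$-form $\eta=e_{i}\lrcorner\omega^{2}$ or $e_{i}\lrcorner\mathrm{Re}\,\Omega$, and matching the outcome to the $J_{\mathbb{C}^{3}}$ or $J_{H}$ action. This is a routine but lengthy linear-algebraic verification in the Hermitian frame $(e_{1},\ldots,e_{6})$, which I would relegate to the Appendix as advertised, checking it is enough to verify the identity on $\mathbb{S}^{5}\times\mathbb{S}^{1}$ where $r=1$ since both sides of \eqref{eq Aux} are linear in $X$ and independent of $r$ once the derivatives $\partial_{r}X_{i},\partial_{s}X_{i}$ are separated out.
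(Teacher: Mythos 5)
Your route is genuinely different from the paper's: you work in the flat frame, so that $d(X\lrcorner\psi_{euc})=\Sigma_{i}dX_{i}\wedge(e_{i}\lrcorner\psi_{euc})+dX_{s}\wedge(\partial_{s}\lrcorner\psi_{euc})$ with $dX_{i}\in \mathrm{span}(dr,ds)$, and then do pointwise linear algebra against the semi-basic anti-self-dual $(1,1)$ curvature, whereas the paper first rewrites $\psi_{euc}$ in the Sasakian frame \eqref{equ Sasakian formula for psi}, proves the intermediate Lie-derivative formula \eqref{equ auxiliary with F} of Lemma \ref{lem formula for auxiliary operator} (including the remainder $Q(X_{0})$, which is where the hypothesis that the coefficients depend only on $r,s$ is really used), and then evaluates the Lie derivatives of the constant fields via Lemma \ref{formula Lxi is -J} with a cancellation of the $e_{i}\lrcorner H$ terms. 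Your treatment of the ``consequently'' part, via $J_{H}=\underline{T}J_{0}$ and \eqref{equ J Atiyah class}, is correct and matches the paper.

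There is, however, one concrete error: you assert that the $X_{s}$-terms ``must contribute \dots in the same way by setting $X_{7}=X_{s}$.'' They contribute nothing, and they must not — the sum in \eqref{eq Aux} stops at $i=6$, and the paper records exactly this vanishing as Fact \ref{fact srv do not contribute to Aux}. In your own set-up the vanishing is immediate: $\partial_{s}\lrcorner\psi_{euc}$ is, up to sign, $\mathrm{Im}\,\Omega_{euc}$, and $F^{0}_{A}\wedge\Omega_{euc}=0$ on $\mathbb{C}^{3}\setminus O$ because $F^{0}_{A}$ is of type $(1,1)$ (a $(4,1)$- or $(1,4)$-form on a complex $3$-fold vanishes), hence $F^{0}_{A}\wedge dX_{s}\wedge(\partial_{s}\lrcorner\psi_{euc})=0$. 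There is no ``parallel symmetry between $ds$ and the spatial directions'' here, precisely because $F^{0}_{A}$ is pulled back from $\mathbb{P}^{2}$ and singles out the $\mathbb{C}^{3}$-factor. Two further cautions: you wrote $\psi_{euc}=\frac{1}{2}\omega^{2}-ds\wedge\mathrm{Re}\,\Omega$, while the paper's convention \eqref{equ Sasakian formula for psi} is $\psi_{euc}=\frac{1}{2}\omega^{2}+\mathrm{Im}\,\Omega\wedge ds$; with the wrong real/imaginary part the pointwise identification returns the wrong member of the pair $J_{H},J_{G}$ (or wrong signs), so \eqref{eq Aux} would not come out as stated. And the deferred ``bookkeeping'' — the identities $\star[F^{0}_{A}\wedge ds\wedge(e_{i}\lrcorner\psi_{euc})]=-(J_{\mathbb{C}^{3}}e_{i})\lrcorner F^{0}_{A}$ and $\star[F^{0}_{A}\wedge dr\wedge(e_{i}\lrcorner\psi_{euc})]=-J_{H}(e_{i}\lrcorner F^{0}_{A})$ — is the entire analytic content of the proposition (it is what the paper's appendix actually establishes), so the argument is incomplete until it is written out; reducing to points of $\mathbb{S}^{5}$ is legitimate because these are pointwise statements about an anti-self-dual $2$-form on the contact distribution, but the claimed ``independence of $r$'' should be justified by the dilation-invariance of the pulled-back $F^{0}_{A}$ rather than asserted.
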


 \begin{proof}[Proof of Lemma \ref{lem equ for X vs equ for u and v}:] It suffices to apply Lemma \ref{lem formula for auxiliary operator}  and calculate the Lie derivatives therein. The condition that the co-efficients of $X$ only depend on $r,\ s$ is also used in Step 3 of the proof of the preliminary formula \eqref{equ auxiliary with F} below.

 Because the $\frac{\partial}{\partial s}-$component does not contribute to the operator at all (see Fact \ref{fact srv do not contribute to Aux} below), we can assume $X$ is perpendicular to $\frac{\partial}{\partial s}$. The Lie derivatives in the Reeb-vector field $\upsilon$,  radial vector field $\frac{\partial}{\partial r}$, and $\frac{\partial}{\partial s}$ of the symmetric bi-linear forms $ds^{2}$, $dr^{2}$, and $g_{\mathbb{S}^{5}}$ all vanish. We note that $\frac{\partial}{\partial r}$ is not Killing for the Euclidean  metric $ds^{2}+dr^{2}+r^{2}g_{\mathbb{S}^{5}}$. We compute via the Lie derivative formulas in Lemma \ref{formula Lxi is -J} and  elementary Riemannian geometry that
 \begin{equation}L_{\frac{\partial}{\partial r}}X =\Sigma_{i=1}^{6}[\frac{\partial X_{i}}{\partial r}e_{i}-X_{i}\frac{e_{i}^{\perp_{\frac{\partial}{\partial r}}}}{r}]
 ,\ L_{\upsilon}X =-\Sigma_{i=1}^{6}X_{i} J_{\mathbb{C}^{3}}(e_{i})
,\ L_{\frac{\partial}{\partial s}}X = \Sigma_{i=1}^{6}\frac{\partial X_{i}}{\partial s}e_{i}. \end{equation}

 Then the $3$ Lie-derivative contractions  in \eqref{equ auxiliary with F} can be calculated as follows. 
 \begin{eqnarray*}& &(L_{\frac{\partial}{\partial s}}X)\lrcorner \frac{d\eta}{2}=\Sigma_{i=1}^{6}\frac{\partial X_{i}}{\partial s}(e_{i}\lrcorner \frac{d\eta}{2}),
\label{equ  lem equ for X vs equ for u and v 0}
\\& &(L_{\frac{\partial}{\partial r}}X)\lrcorner H=\Sigma_{i=1}^{6}(\frac{\partial X_{i}}{\partial r}-\frac{X_{i}}{r})(e_{i}\lrcorner H),\
\\& & \frac{1}{r}(L_{\upsilon}X)\lrcorner G=-\Sigma_{i=1}^{6}\frac{X_{i}}{r}[J_{\mathbb{C}^{3}}(e_{i})\lrcorner G]
 =\Sigma_{i=1}^{6}\frac{X_{i}}{r}(e_{i}\lrcorner H).  \nonumber
 \end{eqnarray*}
Summing the above $3$ equalities and combining coefficients of similar terms, the two terms containing $e_{i}\lrcorner H$ cancel out and we find 
   \begin{equation}(\frac{\partial X}{\partial s})\lrcorner \frac{d\eta}{2}+(L_{\frac{\partial}{\partial r}} X)\lrcorner H+\frac{(L_{\upsilon}X)\lrcorner G}{r}=\Sigma_{i=1}^{6}\{\frac{\partial X_{i}}{\partial s} \cdot  (e_{i}\lrcorner \frac{d\eta}{2})+\frac{\partial X_{i}}{\partial r} \cdot  (e_{i}\lrcorner H)\}.\nonumber
   \end{equation}
   Here we applied again the remark below \eqref{equ J Atiyah class} about the relation between  $J_{\mathbb{C}^{3}}$ and $J_{0}$.
Using  \eqref{equ auxiliary with F} and contracting the above with $-F^{0}_{A}$, the proof of the desired formula \eqref{eq Aux} is complete.
 \end{proof}

\section{The Dirac system and  proof of Theorem \textbf{A} and Corollary \textbf{B} \label{proof of Theorem A}}
In this section we assemble the established tools to prove the main results. Via separation of variables, the singular sequence is constructed via a linear system of two partial differential equations in $r$ and $s$. 
\subsection{The Dirac system}
 Let $(\xi,X)$ be the independent variable of the model extended linearized operator $L_{0}$, where $X$ is the vector field and $\xi$ is the section of the domain bundle $adE\oplus \Omega^{1}_{adE}$. Because \textit{$RangeAux$ is  spanned  by functions in $r$ and $s$ of Atiyah classes} in both $Eigen_{-1}P$ and $Eigen_{-2}P$, in the perpendicular direction, the extended linearized operator coincides with the usual linearized operator in the following sense. In the Hilbert space $L^{2}[\mathbb{S}^{5}, Dom]$, let $\parallel_{Atiyah}$ denote the projection to the $12$ dimensional subspace 
 \begin{equation}\nonumber
 \{\textrm{Atiyah classes}\}|_{Eigen_{-1}P}\oplus \{\textrm{Atiyah classes}\}|_{Eigen_{-2}P},
 \end{equation}
  and $\perp_{Atiyah}$ the projection to the orthogonal complement.  We have 
$$[Aux(X)]^{\parallel_{Atiyah}}=Aux(X) $$
  for any differentiable vector field $X$ in the punched tubular ball.
  
   For any $\xi\in C^{1}[(B(R_{0})\setminus O)\times \mathbb{S}^{1}]$, 
 \begin{equation}\label{equ perp Atiyah}
 [L_{0}(\xi,X)]^{\perp_{Atiyah}}=(\underline{L}_{0}\xi)^{\perp_{Atiyah}}=\underline{L}_{0}(\xi^{\perp_{Atiyah}}).
 \end{equation}
But $Aux$ does  appear  in the Atiyah class component:
   \begin{equation}
 [L_{0}(\xi,X)]^{\parallel_{Atiyah}}=(\underline{L}_{0}\xi)^{\parallel_{Atiyah}}+Aux(X)=\underline{L}_{0}(\xi)^{\parallel_{Atiyah}}+Aux(X). 
 \end{equation}

Gram-Schmit process for each eigen-space of $P$ yields a complete orthonormal $P-$eigen-basis $(\phi_{\beta},\ \beta\in Spec^{mul}P)$  for $L^{2}[\mathbb{S}^{5},Dom]$ such that

\begin{itemize}\item  the eigen-section $\zeta$ perpendicular to the Atiyah classes  in condition III appears  as an element in the basis  if essential obstruction is non-trivial, 
\item  $6$ elements of the eigen-basis form an $I-$invariant  orthonormal basis for\\ $\{\textrm{Atiyah  classes}\} |_{Eigen_{-1}P}$, and  applying  $\underline{T}$  yields that of $\{\textrm{Atiyah classes}\}|_{Eigen_{-2}P}$.
\end{itemize}

Via separation of variables, we need to solve equations for the Fourier-coefficient of  an arbitrary section $\phi_{\beta}$ in the eigen-basis.
 However, because of the endomorphism $\underline{T}$ in the  Dirac operator \eqref{equ underlineL0} (see \cite[Lemma 5.3]{W}),  we need to consider $\phi_{\beta}$ and $\underline{T}\phi_{\beta}$ simultaneously. Particularly, in line with \eqref{equ perp Atiyah} and that $\zeta$ is perpendicular to the Atiyah classes,   the operator $-I\cdot \underline{L}_{0}$ also preserves the span by functions in $r,\ s$ of $\zeta$ and $\underline{T}\zeta$: 
  \begin{equation}\label{equ L0 span}
 (-I\cdot \underline{L}_{0}\xi)^{\parallel_{span\{\zeta,\ \underline{T}\zeta\}}}=(-I\cdot \underline{L}_{0})(\xi)^{\parallel_{span\{\zeta,\ \underline{T}\zeta\}}}\ \textrm{for any}\ \xi\in C^{1}[(B(R_{0})\setminus O)\times \mathbb{S}^{1}].
 \end{equation}
 
 The   equation in $span\{\zeta,\ \underline{T}\zeta\}$ of two unknowns $x$ and $y$ reads
$$-I\cdot L(x\zeta+y\underline{T}\zeta)=f\zeta+g\underline{T}\zeta.$$
According to formula \eqref{equ underlineL0} for the usual linearized operator, 
this is equivalent to the  \textit{Dirac system} of two variables:
 \begin{equation}\left\{\begin{array}{c}\frac{\partial x}{\partial s}+\frac{\partial y}{\partial r}+\frac{2y}{r}=f;\\
 \frac{\partial y}{\partial s}-\frac{\partial x}{\partial r}-\frac{ x}{r}=g.
 \end{array}\right.
 \end{equation}

Plugging
\begin{equation}\label{equ FDy}\frac{\partial y}{\partial s}=\frac{\partial x}{\partial r}+\frac{x}{r}+g. \end{equation}into $\frac{\partial}{\partial s}$ of the first equation, we find a second order equation only in $x$.
\begin{equation}\nonumber
\frac{\partial^{2} x}{\partial r^{2}}+\frac{\partial^{2} x}{\partial s^{2}}+\frac{3}{r}\frac{\partial x}{\partial r}+\frac{x}{r^{2}}=\frac{\partial f}{\partial s}-\frac{\partial g}{\partial r}-\frac{2g}{r}\triangleq h.
\end{equation}
The equation of the Fourier co-efficient of $\cos ks$ and $\sin ks$ reads
 \begin{equation}\label{equ FDx} x^{\prime\prime}_{k}+\frac{3 x^{\prime}_{k}}{r}+\frac{x_{k}}{r^{2}}-k^{2}x_{k}=h_{k}.
 \end{equation}
 This ordinary differential equation  can be solved elementarily.

\subsection{The singular sequence \label{sect singular seq and proof of main thm}}
Now we construct a sequence that violates the closeness of the range. We only consider positive independent variable for the special functions. Let $k$ be a positive integer and 
\begin{equation}\label{equ bad h}h_{k}(y)\triangleq y^{\delta}\chi(ky)K_{0}(ky),\end{equation} where $\chi(r)$ is a cut-off function that is $\equiv 0$ when $r\leq 1$ or $r\geq 4$, but $\equiv 1$ when $r\in [2,3]$.

In the following, $h_{k}$ and $x_{k}$ are specific as \eqref{equ bad h} and \eqref{equ lem bad solution}, but in the previous section they are general. 
\begin{lem}\label{lem bad solution go to infty when k goes to infinity} For any non-negative number $\delta$  there is a positive constant $C_{\delta}$ with the following property.  Let $h_{k}$ be as  
\eqref{equ bad h}.  The only solution to \eqref{equ FDx} that $=O(1)$ as $r\rightarrow 0$  is 
\begin{equation}\label{equ lem bad solution}x_{k}=-\frac{K_{0}(kr)}{r}\int^{r}_{0}I_{0}(ky)y^{2}h_{k}(y)dy+\frac{I_{0}(kr)}{r}\int^{r}_{0}K_{0}(ky)y^{2}h_{k}(y)dy.
\end{equation}
 The following holds for any positive integer $k$ and  real number $r$ such that $kr\geq 10$.
\begin{equation}\label{equ xk lower bound}|x_{k}|\geq  \frac{C_{\delta}\cdot e^{kr}}{k^{\frac{7}{2}+\delta}r^{\frac{3}{2}}}.\end{equation} 
Consequently, $\lim_{k\rightarrow \infty}|x_{k}|=+\infty$ uniformly on any compact subset of $(0,\infty)$.

\end{lem}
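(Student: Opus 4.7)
The plan is to reduce the ODE \eqref{equ FDx} to the modified Bessel equation of order zero via the substitution $u(r) = r\, x_k(r)$. A short calculation gives
\[
x_k'' + \tfrac{3}{r} x_k' + \tfrac{1}{r^2} x_k - k^2 x_k \;=\; \tfrac{1}{r}\bigl(u'' + \tfrac{1}{r} u' - k^2 u\bigr),
\]
so $u$ satisfies $u'' + u'/r - k^2 u = r\, h_k$. The homogeneous solutions are $I_0(kr)$ and $K_0(kr)$, whose Wronskian in $r$ is $W(I_0(kr), K_0(kr)) = k\cdot(-1/(kr)) = -1/r$ (from the standard identity $I_0 K_0' - I_0' K_0 = -1/t$). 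Variation of parameters with lower limit $0$ produces $u$ as the sum of the two natural integrals, and dividing by $r$ recovers \eqref{equ lem bad solution} precisely.

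For the uniqueness claim, I would note that $\chi(ky) = 0$ whenever $ky < 1$, so for $r < 1/k$ both integrals in \eqref{equ lem bad solution} vanish identically and the particular solution is zero on $(0, 1/k)$; in particular it is $O(1)$ as $r \to 0$. Any other $O(1)$ solution differs by a linear combination of the homogeneous solutions $I_0(kr)/r$ and $K_0(kr)/r$, whose leading behaviour at the origin is $1/r$ and $-\log(kr)/r$ respectively; boundedness forces both coefficients to vanish.

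The lower bound \eqref{equ xk lower bound} rests on positivity. Since $I_0, K_0 > 0$ on $(0,\infty)$ and $h_k \ge 0$, both integrals in \eqref{equ lem bad solution} are non-negative. The integrand $h_k$ is supported in $\{1/k \le y \le 4/k\}$, which for $kr \ge 10$ lies entirely in $(0, r)$. Changing variables $t = ky$ and using $\chi \equiv 1$ on $[2,3]$,
\[
\int_0^r K_0(ky)\, y^2\, h_k(y)\, dy \;\ge\; k^{-3-\delta} \int_2^3 K_0(t)^2\, t^{\delta+2}\, dt \;=\; c_\delta\, k^{-3-\delta}.
\]
Combined with the standard large-argument asymptotic $I_0(t) \sim e^t/\sqrt{2\pi t}$, the first term of \eqref{equ lem bad solution} is bounded below by $C_\delta' e^{kr}/(k^{7/2+\delta} r^{3/2})$. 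The symmetric estimate using $K_0(t) \sim \sqrt{\pi/(2t)}\, e^{-t}$ shows the subtracted term is at most $C\, e^{-kr}/(k^{7/2+\delta} r^{3/2})$, which is exponentially smaller, so the triangle inequality yields \eqref{equ xk lower bound} (adjusting $C_\delta$ if $kr \ge 10$). The final uniform divergence on compact $K = [r_0, R_0]\subset (0,\infty)$ is then automatic: for $k \ge 10/r_0$ the estimate applies and $e^{kr_0}/k^{7/2+\delta} \to \infty$.

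The main obstacle I anticipate is purely bookkeeping around the Bessel asymptotics: one has to package the sub-leading corrections into a single constant $C_\delta$ that works uniformly for all $k$ and all $r$ with $kr \ge 10$, and to verify that the exponentially small subtracted term cannot erode the lower bound on the exponentially growing main term. Everything beyond this is direct verification.
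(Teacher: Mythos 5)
Your proposal is correct and follows essentially the same route as the paper: the same particular solution \eqref{equ lem bad solution} (which you additionally derive via the substitution $u=r\,x_k$ and variation of parameters, where the paper simply asserts the general solution), the same uniqueness argument from the $\frac{1}{r}$ and $\frac{\log r}{r}$ blow-up of the homogeneous solutions $\frac{I_0(kr)}{r}$, $\frac{K_0(kr)}{r}$, and the same positivity-plus-Bessel-asymptotics treatment of the two integral terms, your exponentially small bound on the $K_0(kr)$-prefactored term being in fact slightly sharper than the paper's $O(k^{-2-\delta})$ bound. The bookkeeping you flag at the end — packaging the constants so the subtracted term cannot erode the main term uniformly for $kr\geq 10$ and all $\delta\geq 0$ — is exactly what the paper also does, by fixing $C_\delta$ after comparing the explicit integral constants.
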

\begin{rmk} The solution $x_{k}$ is  supported away from $0$ since $h_{k}$ is. The constant $C_{\delta}$ is given by integral and point-wise bounds on the special functions. 
\end{rmk}
 
  
\begin{proof}The trick is to consider  $kr$ instead of $r$ alone. 
The general solution to the ODE is
\begin{equation}
-\frac{K_{0}(kr)}{r}\int^{r}_{0}I_{0}(ky)y^{2}h_{k}(y)dy+\frac{I_{0}(kr)}{r}\int^{r}_{0}K_{0}(ky)y^{2}h_{k}(y)dy+\frac{aK_{0}(kr)}{r}+\frac{bI_{0}(ky)}{r}.
\end{equation}
The main part $x_{k}$ is compactly supported away from $0$, but the homogeneous solutions $\frac{K_{0}(kr)}{r}$ and $ \frac{I_{0}(kr)}{r}$ have leading terms $\frac{C \log r}{r}$ and $\frac{C}{r}$ for nonzero constant $C's$, respectively. Since we require $x$ to be $O(1)$,   these two homogeneous solutions can  not appear i.e.   $a,\ b$ must be $0$.


 In order to bound the first term in \eqref{equ lem bad solution}, we estimate the integral for any $r$: 
\begin{equation}
|\int^{r}_{0}I_{0}(ky)y^{2}h_{k}(y)dy|\leq \frac{1}{k^{3+\delta}}\int^{\infty}_{0}\chi(w)I_{0}(w)|K_{0}(w)|w^{2+\delta}dw\leq \frac{\underline{C}_{2,\delta}}{k^{3+\delta}}, 
\end{equation}
where  $\underline{C}_{2,\delta}$ is  the value of the integral $$\int_{1}^{4}I_{0}(w)|K_{0}(w)|w^{2+\delta}dw.$$ Please notice that $\chi$ is supported in the interval. Then if  $kr\geq 1$, using  the bound on $\frac{K_{0}(x)}{x}$ when $x\geq 1$,   we find 
\begin{equation}|-\frac{K_{0}(kr)}{r}\int^{r}_{0}I_{0}(ky)h_{k}(y)y^{2}dy|\leq \frac{\underline{C}_{2,\delta}}{k^{2+\delta}}\cdot \frac{|K_{0}(kr)|}{kr} \leq \frac{\underline{C}_{3,\delta}}{k^{2+\delta}}.\end{equation}

To bound the second term in \eqref{equ lem bad solution} from below, we compute 
 \begin{eqnarray*}
& &\frac{I_{0}(kr)}{r}\int^{r}_{0}K_{0}(ky)y^{2}h_{k}(y)dy=\frac{I_{0}(kr)}{k^{3+\delta}r}\int^{kr}_{0}\chi(w)K^{2}_{0}(w)w^{2+\delta} dw\geq \frac{\underline{C}_{4,\delta}\cdot I_{0}(kr)}{k^{3+\delta}r}\\&\geq &\frac{\underline{C}_{4,\delta}\cdot \underline{C}_{5} e^{kr}}{k^{3+\delta}r\cdot \sqrt{kr}}. \nonumber
\end{eqnarray*}
where the constant $\underline{C}_{4,\delta}$ equals $\int ^{4}_{1}K^{2}_{0}(w)w^{2+\delta}dw$, and $\underline{C}_{5}$ equals the positive lower bound on $e^{-w}\sqrt{w}I_{0}(w)$ for $w\geq 1$. Let $C_{\delta}$ be large enough regarding these two constants and $\underline{C}_{3,\delta}$, the proof of \eqref{equ xk lower bound} is complete.  \end{proof}

\subsection{Proof of Theorem \textbf{A} and Corollary \textbf{B}}
In functional analysis, closed range is equivalent to  existence of ``a priori estimate" in the following sense. 

\begin{fact}\label{fact closed range}
Suppose $L:\ X\rightarrow Y$ is a bounded linear map between Banach spaces. Then $RangeL$ is closed if and only if there is a non-negative constant $N$ such that for any $y\in RangeL$, there exists a solution $x$ to the equation $Lx=y$ with the bound 
\begin{equation}\label{equ Functorial uniform est}||x||_{X}\leq N||y||_{Y}.\end{equation} 
\end{fact}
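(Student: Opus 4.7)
The plan is to derive both implications from the open mapping theorem (equivalently, the bounded inverse theorem) together with completeness of $X$. Neither direction involves any machinery beyond Banach-space generalities, so the proposal is essentially to record the standard argument in the form needed to quote it later.

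For the forward direction, I would assume $\text{Range}\, L$ is closed, so that it is itself a Banach space under the norm inherited from $Y$. Since $L$ is bounded, $\ker L$ is closed, so the quotient $X/\ker L$ is also a Banach space, and $L$ descends to a bounded bijection $\tilde L : X/\ker L \to \text{Range}\, L$. The bounded inverse theorem then furnishes a constant $C$ with $\|[x]\|_{X/\ker L} \leq C \|y\|_Y$ whenever $\tilde L [x] = y$. Given any $y \in \text{Range}\, L$, I would invoke the definition of the quotient norm as an infimum to pick a representative $x$ of $[x]$ satisfying $\|x\|_X \leq 2 \|[x]\|_{X/\ker L}$, so that $\|x\|_X \leq 2 C \|y\|_Y$; the constant $N = 2C$ works.

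For the reverse direction, I would take a sequence $y_n \in \text{Range}\, L$ converging to some $y \in Y$ and show $y \in \text{Range}\, L$. Passing to a subsequence if necessary, I may assume $\|y_{n+1} - y_n\|_Y \leq 2^{-n}$. The hypothesis then supplies $w_n \in X$ with $L w_n = y_{n+1} - y_n$ and $\|w_n\|_X \leq N \cdot 2^{-n}$, so the series $\sum_n w_n$ converges absolutely in the complete space $X$ to some $w$. Telescoping and continuity of $L$ yield $L w = y - y_1$, and adding any preimage $x_1$ of $y_1$ (again provided by the hypothesis) gives $y = L(x_1 + w) \in \text{Range}\, L$. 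The only mildly subtle point, and the main obstacle worth flagging, is that the quotient norm is an infimum and may fail to be attained, which is why the forward argument selects a representative only within a factor of $2$ rather than with $\|x\|_X = \|[x]\|_{X/\ker L}$; no analytic issues specific to the geometric setting of the paper arise.
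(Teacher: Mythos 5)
Your proof is correct. The paper gives no argument for Fact \ref{fact closed range} at all, stating it as a standard consequence of the open mapping theorem, and your two directions (the bounded inverse theorem applied to the induced bijection $X/\Ker L \rightarrow Range L$ with a representative chosen within a factor of $2$ of the quotient norm, and the absolutely convergent telescoping series using completeness of $X$ for the converse) are exactly the standard argument that the paper implicitly invokes, with the only micro-detail being the trivial case $y=0$, which is handled by taking $x=0$.
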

\begin{proof}[\textbf{Proof of Theorem} \textbf{A}:]  
\textit{The idea is to construct singular sequence whenever the essential obstruction does not vanish}.  We only show it for  the extended linearized operator using conditions II--V and III$^\star$.  Similar argument applies to the usual $\underline{L}$ under conditions I--III and III$^\star$. 

Definition \ref{Def gdc} of the configuration says that we are in the model setting in the tubular ball. 
Given a large enough positive integer $k$, we specify the single variable function $h_{k}$ in $y$ (the radius) as in \eqref{equ bad h} and let  $f_{k}=\frac{h_{k}}{k}$. Again, let $\zeta$ be the eigen-section in condition III.  
 
\textit{Because the auxiliary operator does not cover $\zeta$} which is perpendicular to all the Atiyah classes in $Eigen_{-1}P$, and that 
 $-I\cdot L_{0}$ commutes with  the projection to  $span\{\zeta,\ \underline{T}\zeta\}$ ($-I\cdot L_{0}=-I\cdot \underline{L}_{0}$ thereon, see \eqref{equ perp Atiyah} and \eqref{equ L0 span}), the $\zeta \cos ks-$component of any solution $\xi_{k}= O(\frac{1}{r})$ to 
 \begin{equation}\label{equ 0 proof Thm A}
 L_{0}\xi_{k}=(f_{k}\sin ks )I\zeta
 \end{equation}
 must be $O(1)$ thus equals the $x_{k}$ in \eqref{equ lem bad solution}. To see this, in view of the argument  from \eqref{equ L0 span} to \eqref{equ FDx} on Dirac system, we simply project both sides of  \eqref{equ 0 proof Thm A} onto $span\{\zeta,\ \underline{T}\zeta\}$ according to  \eqref{equ L0 span}, then take the Fourier co-efficient of $\cos ks$ and apply Lemma \ref{lem bad solution go to infty when k goes to infinity}.  
Therefore the $L^{2}-$norm of $\xi_{k}$ on the stripe defined by $\underline{R_{0}}/10<r< \underline{R_{0}}/5$ tends to $\infty$ as $k\rightarrow \infty$. As the $C^{0}_{1}-$norm on the punched ball of radius $\underline{R}_{0}$ is stronger than this $L^{2}-$norm, condition II implies
$$|\xi_{k}|_{Y}\rightarrow \infty \ \textrm{as}\ k\rightarrow \infty.$$ 
Condition III$^\star$ says that $(f_{k}\sin ks) I\zeta$ is in $RangeL$ and III says their $\mathcal{B}-$norm are uniformly bounded. According to the characterization of closed range  in Fact \ref{fact closed range},  $RangeL$ is not closed. \end{proof}

Under a tame configuration over a compact $7-$fold, let  \begin{equation}C_{r,s}^{2}[M, TM]\subset C^{2}[M, TM]  \label{equ vect fields for the example}
\end{equation}
be the subspace of $C^{2}$ vector fields that restrict to the  form \eqref{equ vector field} in the punched tubular ball  i.e. only depending on $r$ and $s$  in $B(R_{0})\times \mathbb{S}^{1}$ near each circle. Between weighted Schauder spaces as in Section \ref{sect weighted schauder},   consider  the usual linearized operator \begin{equation}\label{equ usual L compact}
  \underline{L}:\ C^{1,\frac{1}{2}}_{1-\delta}[M^{7}\setminus\gamma, \Omega^{0}_{adE}\oplus \Omega^{1}_{adE}]\rightarrow  C^{1,\frac{1}{2}}_{2-\delta}[M^{7}\setminus\gamma, \Omega^{0}_{adE}\oplus \Omega^{1}_{adE}]
\end{equation} and the extended linearized operator 
 \begin{equation}\label{equ extended  usual L compact}
L:\ C^{1,\frac{1}{2}}_{1}[M\setminus\gamma, \Omega^{0}_{adE}\oplus \Omega^{1}_{adE}]\times C_{r,s}^{2}[M, TM] \rightarrow  C^{1,\frac{1}{2}}_{2}[M^{7}\setminus\gamma, \Omega^{0}_{adE}\oplus \Omega^{1}_{adE}].
\end{equation}  
Both are bounded. 
The reason we let $\delta=0$ for the extended linearization is that we do not know whether $Aux$  has a better bound than $C^{1,\frac{1}{2}}_{2}$, due to the quadratic growth of the norm of the curvature near the circles.
\begin{proof}[\textbf{Proof of Corollary B}:] It is straight-forward to verify conditions I--V. Was $L$ surjective, condition III$^{\star}$ holds as well i.e. the configuration is ideal. Then  Theorem \textbf{A} says $RangeL$ is not closed, which is a contradiction. Similar argument applies to the usual $\underline{L}$.

For the reader's convenience, we still provide the detail in checking the conditions.   


\begin{itemize}
\item Condition I (saying $\underline{L}$ is bounded)   holds by formula \eqref{equ introduction formula for deformation operator}, our choice \eqref{equ extended  usual L compact}, and definition of the weighted Schauder spaces. The weight for the first derivatives has $1$ more power than that for the section itself. 


\item Condition II (coerciveness) holds because restricted to the tubular ball, the norm $C^{0}_{1}$ is weaker than $C^{1,\frac{1}{2}}_{1}$ ($C^{1,\frac{1}{2}}_{1-\delta}$).
\item Condition III (bound on the particular sequence) follows simply from  the decay of the modified Bessel function $K_{0}$ and that  $\chi$ is non-negative, supported in $(1, 4)$, and bounded by $1$. Namely, the following holds for large positive integer $k$.  
\begin{equation}\nonumber
\sup_{r\in (0,\infty)}r^{2-\delta}|\frac{r^{\delta}\chi(kr)K_{0}(kr)I\zeta\cdot \sin ks}{k}| \nonumber
= \sup_{r\in (0,\infty)}|\frac{(kr)\chi(kr)K_{0}(kr)}{k^{2}}|\cdot |rI\zeta| 
\leq  C.
\end{equation}
The  $r^{3-\delta}-$weighted  bounds on the $\frac{\partial}{\partial r}$,  $\frac{\partial }{\partial s}$, and $\frac{\nabla_{\mathbb{S}^{5}}}{r}$ of $\frac{r^{\delta}\chi(kr)K_{0}(kr)I\zeta\cdot \sin ks}{k}$ follow similarly. Then 
\begin{equation}
|\frac{r^{\delta}\chi(kr)K_{0}(kr)I\zeta\cdot \sin ks}{k}|_{C^{1}_{2-\delta}}\leq C.
\end{equation}
This implies the $C^{0,\frac{1}{2}}_{2-\delta}-$bound  of the same thing by interpolation of weighted Schauder norms. 
\item Condition III$^{\star}$ is simply the contradiction hypothesis that the linearization is surjective. Condition IV holds automatically because of  our vector fields \eqref{equ vect fields for the example}.
\item   Condition V  (saying $L$ is bounded)  holds by formula \eqref{equ l tilde}, our choice \eqref{equ extended  usual L compact},  and the simple weighted H\"older bound on the auxiliary operator:
   \begin{equation}|\star(F^{0}_{A}\wedge d[X\lrcorner \psi])+\star(F^{0}_{A}\wedge [X\lrcorner d\psi])|_{C^{0,\frac{1}{2}}_{2}}\leq |X|_{C^{2}}.\nonumber
\end{equation}
Because it involves first partial  derivatives of $X$, we need $X$ to be $C^{2}$. 
\end{itemize}
\end{proof}
\section{Appendix}
 \addtocontents{toc}{\protect\setcounter{tocdepth}{0}}
\subsection{Non-vanishing of  a certain projection of coordinate vector fields on $\mathbb{R}^{6}$}
It is routine to check the following ``non-vanishing" that applies to the proof of Proposition \ref{prop atiyah}.  Let $(Z_{0},Z_{1},Z_{2})$ be the coordinates for $\mathbb{C}^{3}$ and $\upsilon$ be the standard Reeb vector field on $\mathbb{S}^{5}$.
 \begin{fact}\label{fact rY has non-vanishing D component on a dense open set}Let $Y\in \mathbb{R}^{6}\setminus O$ be a (constant) nonzero vector. There exists a dense open set on $\mathbb{S}^{5}$ on which $(rY)^{\parallel_{D}}$ is non-zero everywhere. 
\end{fact}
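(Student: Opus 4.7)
The plan is to compute $(rY)^{\parallel_D}$ explicitly at a point of $\mathbb{S}^5$ and identify its vanishing locus as a real one-dimensional submanifold, hence with dense open complement.

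First I would observe that along $\mathbb{S}^5$ we have $r\equiv 1$, so at $x\in\mathbb{S}^5$ the vector $rY$ agrees with the constant vector $Y\in\mathbb{R}^6$ regarded as an element of $T_x\mathbb{R}^6=\mathbb{R}^6$. The tangent space $T_x\mathbb{S}^5$ is the orthogonal complement of $x$ inside $\mathbb{R}^6$, and the contact distribution is $D_x=\upsilon(x)^{\perp}\cap T_x\mathbb{S}^5$, where $\upsilon(x)=J_{\mathbb{C}^3}(x)$. Hence $D_x$ is the orthogonal complement of $\mathrm{span}_{\mathbb{R}}\{x,\,J_{\mathbb{C}^3}(x)\}$ in $\mathbb{R}^6$.

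From this the explicit formula
\begin{equation*}
(rY)^{\parallel_D}(x) \;=\; Y \;-\; \langle Y,x\rangle\, x \;-\; \langle Y,J_{\mathbb{C}^3}(x)\rangle\, J_{\mathbb{C}^3}(x)
\end{equation*}
follows immediately. This projection vanishes at $x$ precisely when $Y\in\mathrm{span}_{\mathbb{R}}\{x,J_{\mathbb{C}^3}(x)\}$. But $\mathrm{span}_{\mathbb{R}}\{x,J_{\mathbb{C}^3}(x)\}$ is exactly the complex line $\mathbb{C}\cdot x\subset\mathbb{C}^3$. Thus the vanishing locus is
\begin{equation*}
Z_Y \;=\; \{x\in\mathbb{S}^5 \,:\, Y\in\mathbb{C}\cdot x\} \;=\; \{x\in\mathbb{S}^5\,:\, x\in\mathbb{C}\cdot Y\} \;=\; \mathbb{S}^5\cap(\mathbb{C}\cdot Y),
\end{equation*}
where the middle equality uses $Y\neq 0$.

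Finally, since $\mathbb{C}\cdot Y$ is a real two-plane in $\mathbb{R}^6$, the intersection $Z_Y$ is a unit circle $\mathbb{S}^1$ inside $\mathbb{S}^5$, a closed submanifold of codimension four. Its complement is therefore open and dense, proving the fact. The only delicate point is correctly identifying $D_x$ as a subspace of $\mathbb{R}^6$ via the embedding $T_x\mathbb{S}^5\hookrightarrow T_x\mathbb{R}^6$; once that is set up the argument is a direct linear-algebra computation with no real obstacle.
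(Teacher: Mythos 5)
Your proof is correct, and it takes a genuinely different route from the paper. You identify $D_x$ extrinsically as the orthogonal complement of $\mathrm{span}_{\mathbb{R}}\{x,\,J_{\mathbb{C}^{3}}x\}$ in $\mathbb{R}^{6}$ (legitimate here, since at points of $\mathbb{S}^{5}$ the splitting $\mathrm{span}(\frac{\partial}{\partial r},\upsilon)\oplus D$ used for $\parallel_{D}$ is orthogonal and $\upsilon(x)=J_{\mathbb{C}^{3}}x$), write the projection as $Y-\langle Y,x\rangle x-\langle Y,J_{\mathbb{C}^{3}}x\rangle J_{\mathbb{C}^{3}}x$, and conclude that the vanishing locus is exactly the great circle $\mathbb{S}^{5}\cap(\mathbb{C}\cdot Y)$, whose complement is open and dense. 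The paper instead splits $Y$ into its $(1,0)$ and $(0,1)$ parts and computes $(rY^{1,0})^{\parallel_{D}}$ in the Sasakian coordinate chart $\{Z_{0}\neq 0\}$ via the explicit expressions for $Z_{0}\frac{\partial}{\partial Z_{0}}$, $Z_{0}\frac{\partial}{\partial Z_{1}}$, $Z_{0}\frac{\partial}{\partial Z_{2}}$, obtaining the dense open set as $\{Z_{0}\neq 0,\ (b_{Y}-a_{Y}u_{1})\neq 0\ \textrm{or}\ (c_{Y}-a_{Y}u_{2})\neq 0\}$. Your argument is more elementary and coordinate-free, and it is sharper in that it pins down the zero set as a single circle (codimension four); the paper's computation is less transparent as pure linear algebra but stays inside the Sasakian coordinate framework and the formulas \eqref{equ formula for the invariant vector field ZddZ} that the appendix reuses for the Lie-derivative identities, so it exhibits the non-vanishing directly in the coordinates used elsewhere. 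Either way the conclusion needed for Proposition \ref{prop atiyah} (non-vanishing on a dense open set) follows.
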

\begin{proof}[Proof of Fact \ref{fact rY has non-vanishing D component on a dense open set}:] We write $Y=Y^{1,0}+Y^{0,1}$, where $$Y^{1,0}=a_{Y}\frac{\partial}{\partial Z_{0}}+b_{Y}\frac{\partial}{\partial Z_{1}}+c_{Y}\frac{\partial}{\partial Z_{2}},\ \textrm{and}\ Y^{0,1}=\overline{Y^{1,0}}$$
for some  complex constants $a_{Y},\ b_{Y},\ c_{Y}$. Under the Sasakian coordinate in $U_{0,\mathbb{S}^{5}}\subset \mathbb{S}^{5}$ defined by $Z_{0}\neq 0$ \cite[(15)]{W}, 
using formula \eqref{equ formula for the invariant vector field ZddZ} and  $$Z_{0}\frac{\partial}{\partial Z_{1}}=\frac{Z_{0}\bar{Z}_{1}}{2r}\frac{\partial}{\partial r}+\frac{\partial}{\partial u_{1}};\ Z_{0}\frac{\partial}{\partial Z_{2}}=\frac{Z_{0}\bar{Z}_{2}}{2r}\frac{\partial}{\partial r}+\frac{\partial}{\partial u_{2}}$$ for $(1,0)$ coordinate vectors in $\mathbb{C}^{3}$, 
we calculate the projection onto the contact distribution over $\mathbb{S}^{5}$:
\begin{eqnarray*} (rY^{1,0})^{\parallel_{D}}&=&\frac{r}{Z_{0}}[(b_{Y}-a_{Y}u_{1})\frac{\partial}{\partial u_{1}}+(c_{Y}-a_{Y}u_{2})\frac{\partial}{\partial u_{2}}]^{\parallel_{D}}
\\&=& \frac{r}{Z_{0}}[(b_{Y}-a_{Y}u_{1})(\frac{\partial}{\partial u_{1}}-\eta(\frac{\partial}{\partial u_{1}})\upsilon)+(c_{Y}-a_{Y}u_{2})(\frac{\partial}{\partial u_{2}}-\eta(\frac{\partial}{\partial u_{2}})\upsilon)].\end{eqnarray*}
When  $(b_{Y}-a_{Y}u_{1})\neq 0$ or $(c_{Y}-a_{Y}u_{2})\neq 0$, we have $(rY^{1,0})^{\parallel_{D}}\neq 0$. These two non-vanishing conditions together with $Z_{0}\neq 0$ define a dense open set on $\mathbb{S}^{5}$.
\end{proof}




\subsection{The Lie derivatives of the vector fields on $\mathbb{C}^{3}\setminus O$}

Proposition \ref{lem equ for X vs equ for u and v} applies the following formulas of the Lie derivatives. 
\begin{lem}\label{formula Lxi is -J}1. Let $\upsilon$ be the standard Reeb vector field on $\mathbb{S}^{5}$. Then \begin{equation}\label{equ 0 formula Lxi is -J}
L_{\upsilon}(Z_{i}\frac{\partial}{\partial Z_{i}})=0,\ \ L_{\upsilon}(\bar{Z}_{i}\frac{\partial}{\partial \bar{Z}_{i}})=0.
\end{equation} 
Consequently, on $\mathbb{R}^{6}$ and its complexfication, $L_{\upsilon}=-J_{\mathbb{C}^{3}}$ is equal to the negative of the standard complex structure i.e.  
\begin{equation}\label{equ 1 formula Lxi is -J} L_{\upsilon}\frac{\partial}{\partial Z_{i}}=-\sqrt{-1}\frac{\partial}{\partial Z_{i}},\ L_{\upsilon}\frac{\partial}{\partial \bar{Z}_{i}}=\sqrt{-1}\frac{\partial}{\partial \bar{Z}_{i}}.\end{equation}
Particularly, for any vector $Y\in \mathbb{R}^{6}$, $L_{\upsilon}=-J_{\mathbb{C}^{3}}Y$.

2. $L_{\frac{\partial}{\partial r}}\frac{\partial}{\partial Z_{i}}=-\frac{(\frac{\partial}{\partial Z_{i}})^{\perp_{\frac{\partial}{\partial r}}}}{r}$. The complex conjugate  $L_{\frac{\partial}{\partial r}}\frac{\partial}{\partial \overline{Z}_{i}}=-\frac{(\frac{\partial}{\partial \overline{Z}_{i}})^{\perp_{\frac{\partial}{\partial r}}}}{r}$ also holds. This means $L_{\frac{\partial}{\partial r}}$ is $-\frac{1}{r}$ times the projection to the orthogonal complement of $\frac{\partial}{\partial r}$. 
Particularly, for any vector $Y\in \mathbb{R}^{6}$, $L_{\frac{\partial}{\partial r}}Y=-\frac{1}{r}\cdot Y^{\perp_{\frac{\partial}{\partial r}}}$.

\end{lem}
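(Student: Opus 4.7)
The plan is to reduce both parts to direct bracket calculations using the explicit coordinate expressions for $\upsilon$ and $\frac{\partial}{\partial r}$. I would begin by recording the standard formulas $\upsilon = \sqrt{-1}\sum_{j=0}^{2}\bigl(Z_{j}\frac{\partial}{\partial Z_{j}} - \bar{Z}_{j}\frac{\partial}{\partial \bar{Z}_{j}}\bigr)$ (the infinitesimal generator of the Hopf action $e^{it}\cdot Z = e^{it}Z$) and $\frac{\partial}{\partial r} = \frac{1}{r}E$, where $E = \sum_{j}\bigl(Z_{j}\frac{\partial}{\partial Z_{j}} + \bar{Z}_{j}\frac{\partial}{\partial \bar{Z}_{j}}\bigr)$ is the Euler field; both are immediate from $r^{2} = \sum_{j} Z_{j}\bar{Z}_{j}$.

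For part 1, the identity $L_{\upsilon}(Z_{i}\frac{\partial}{\partial Z_{i}}) = 0$ reduces to $[Z_{j}\frac{\partial}{\partial Z_{j}}, Z_{i}\frac{\partial}{\partial Z_{i}}] = 0$ for all pairs $i,j$, which is an immediate Leibniz computation (for $i = j$ the two contributions cancel; for $i \ne j$ each derivative annihilates the coefficient of the other). The $\bar Z$--version is handled by conjugation. For the second set of identities I would distribute the bracket to get $[\upsilon, \frac{\partial}{\partial Z_{i}}] = \sqrt{-1}\,\frac{\partial}{\partial Z_{i}}$, whence $L_{\upsilon}\frac{\partial}{\partial Z_{i}} = -\sqrt{-1}\,\frac{\partial}{\partial Z_{i}}$; the conjugate statement follows at once. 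Recognizing $-\sqrt{-1}$ as the eigenvalue of $-J_{\mathbb{C}^{3}}$ on $(1,0)$--vectors and extending by real linearity (using that every constant $Y \in \mathbb{R}^{6}$ decomposes into $(1,0)$ and $(0,1)$ parts in the complexification) then yields $L_{\upsilon}Y = -J_{\mathbb{C}^{3}}Y$.

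For part 2, I would apply $[fX, Y] = f[X, Y] - (Yf)X$ with $f = 1/r$ and $X = E$ to write
\begin{equation*}
\bigl[\tfrac{1}{r}E,\ \tfrac{\partial}{\partial Z_{i}}\bigr] \;=\; \tfrac{1}{r}\bigl[E,\ \tfrac{\partial}{\partial Z_{i}}\bigr] \;-\; \tfrac{\partial}{\partial Z_{i}}\!\bigl(\tfrac{1}{r}\bigr)\,E.
\end{equation*}
The bracket with the Euler field is $[E, \frac{\partial}{\partial Z_{i}}] = -\frac{\partial}{\partial Z_{i}}$ (again a Leibniz calculation), and differentiating $r^{2} = \sum Z_{j}\bar{Z}_{j}$ gives $\frac{\partial}{\partial Z_{i}}(\frac{1}{r}) = -\frac{\bar{Z}_{i}}{2r^{3}}$. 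Substituting and using $E = r\frac{\partial}{\partial r}$ produces $L_{\frac{\partial}{\partial r}}\frac{\partial}{\partial Z_{i}} = -\frac{1}{r}\frac{\partial}{\partial Z_{i}} + \frac{\bar{Z}_{i}}{2r^{2}}\frac{\partial}{\partial r}$. To identify this with $-\frac{1}{r}(\frac{\partial}{\partial Z_{i}})^{\perp_{\frac{\partial}{\partial r}}}$ I would compute the Euclidean pairing $\langle \frac{\partial}{\partial Z_{i}}, \frac{\partial}{\partial r}\rangle = \frac{1}{r}\langle \frac{\partial}{\partial Z_{i}}, E\rangle = \frac{\bar{Z}_{i}}{2r}$, using the convention $\langle \frac{\partial}{\partial Z_{i}}, \frac{\partial}{\partial \bar{Z}_{j}}\rangle = \frac{1}{2}\delta_{ij}$. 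The conjugate statement follows by taking complex conjugates, and real linearity yields the assertion for arbitrary $Y \in \mathbb{R}^{6}$.

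The main obstacle is purely bookkeeping: keeping the factors of $\tfrac{1}{2}$ from the Hermitian extension of the Euclidean metric consistent, and being careful about the sign convention distinguishing the Lie bracket from the Lie derivative. No new idea beyond these elementary computations is needed; the geometric content is simply that $\upsilon$ acts as rotation $-J_{\mathbb{C}^{3}}$ on constant fields and that $\frac{\partial}{\partial r}$, being the radial dilation, rescales the component of a constant field transverse to itself by $-1/r$.
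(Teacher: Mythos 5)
Your proposal is correct, but it takes a genuinely different route from the paper. The paper argues in the Sasakian coordinate charts on the dense open set $V_{\mathbb{C}^{3}\setminus O}$: it expresses the scaling-invariant fields $Z_{i}\frac{\partial}{\partial Z_{i}}$ via the chart identities \eqref{equ formula for the invariant vector field ZddZ}--\eqref{equ formula for the invariant vector field ZddZ 2}, uses $\upsilon=\frac{\partial}{\partial \theta_{i}}$ together with $Z_{i}=\frac{r}{\sqrt{\phi_{i}}}e^{i\theta_{i}}$ to get $L_{\upsilon}Z_{i}=\sqrt{-1}Z_{i}$, writes $\frac{\partial}{\partial Z_{i}}=\frac{1}{Z_{i}}(Z_{i}\frac{\partial}{\partial Z_{i}})$ and applies the Leibniz rule, then extends by continuity; the radial projection coefficient appears there as $\frac{r}{2\phi_{i}Z_{i}}$. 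You instead compute globally in flat coordinates, taking $\upsilon=\sqrt{-1}\sum_{j}(Z_{j}\frac{\partial}{\partial Z_{j}}-\bar{Z}_{j}\frac{\partial}{\partial \bar{Z}_{j}})$ (the Hopf generator, which is exactly the paper's $\frac{\partial}{\partial\theta_{i}}$, consistent with the normalization $L_{\upsilon}Z_{i}=\sqrt{-1}Z_{i}$ used in the paper's proof) and $\frac{\partial}{\partial r}=\frac{1}{r}E$ with $E$ the Euler field, and evaluating the brackets directly; your projection coefficient $\frac{\bar{Z}_{i}}{2r}$ agrees with the paper's $\frac{r}{2\phi_{i}Z_{i}}$ because $|Z_{i}|^{2}=r^{2}/\phi_{i}$. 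Your route dispenses with the chart formulas, the K\"ahler potentials, and the continuity step, at the small cost of having to justify the identification of $\upsilon$ with the Hopf generator; the paper's route keeps everything inside the Sasakian framework it uses elsewhere. One caution: under the standard convention $L_{\upsilon}Y=[\upsilon,Y]$, your intermediate claim $[\upsilon,\frac{\partial}{\partial Z_{i}}]=\sqrt{-1}\frac{\partial}{\partial Z_{i}}$ has the wrong sign and contradicts your (correct) conclusion $L_{\upsilon}\frac{\partial}{\partial Z_{i}}=-\sqrt{-1}\frac{\partial}{\partial Z_{i}}$; the direct computation gives $[\upsilon,\frac{\partial}{\partial Z_{i}}]=-\sqrt{-1}\frac{\partial}{\partial Z_{i}}$, so the two sign slips cancel and all final formulas, including those in part 2, stand as written.
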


\begin{proof} We  prove them in the Zariski open set $V_{\mathbb{C}^{3}\setminus O}$ \eqref{equ V zariski}. Then the global equations follow  by  continuity. 

On item 1, recall again \cite[(15)]{W} about the Sasakian coordinate system and the local K\"ahler potentials  $\phi_{i}$ of the  Fubini-Study metric $\frac{d\eta}{2}$ such that  \begin{equation}\label{equ 3 formula Lxi is -J}Z_{i}=\frac{r}{\sqrt{\phi_{i}}}e^{i\theta_{i}}.\end{equation}
In the $i-th$ Sasakian coordinate chart, the Reeb vector field  $\upsilon$ equals $\frac{\partial}{\partial \theta_{i}}$ (\cite[Fact 3.4]{W}). On the Euler sequence, $Z_{i}\frac{\partial }{\partial Z_{i}}$ are the scaling invariant holomorphic  vector fields on $\mathbb{C}^{3}\setminus O$ whose projections to $\mathbb{P}^{2}$ span the holomorphic tangent bundle point-wisely.  We directly verify the following identities.
 \begin{eqnarray}\label{equ formula for the invariant vector field ZddZ}& &Z_{0}\frac{\partial}{\partial Z_{0}}=\frac{r}{2\phi_{0}}\frac{\partial}{\partial r}-\frac{\sqrt{-1}}{2}\frac{\partial}{\partial \theta_{0}}-u_{1}\frac{\partial}{\partial u_{1}}-u_{2}\frac{\partial}{\partial u_{2}}\ \textrm{in}\ \ U_{0,\mathbb{C}^{3}};\\ & & \label{equ formula for the invariant vector field ZddZ 1}
Z_{1}\frac{\partial}{\partial Z_{1}}=\frac{r}{2\phi_{1}}\frac{\partial}{\partial r}-\frac{\sqrt{-1}}{2}\frac{\partial}{\partial \theta_{1}}-v_{0}\frac{\partial}{\partial v_{0}}-v_{2}\frac{\partial}{\partial v_{2}}\ \textrm{in}\ \ U_{1,\mathbb{C}^{3}};\\& & \label{equ formula for the invariant vector field ZddZ 2}
Z_{2}\frac{\partial}{\partial Z_{2}}=\frac{r}{2\phi_{2}}\frac{\partial}{\partial r}-\frac{\sqrt{-1}}{2}\frac{\partial}{\partial \theta_{2}}-w_{0}\frac{\partial}{\partial w_{0}}-w_{1}\frac{\partial}{\partial w_{1}} \ \textrm{in}\ \ U_{2,\mathbb{C}^{3}}.
\end{eqnarray}

Using the above identities,  the desired \eqref{equ 0 formula Lxi is -J} follows because each term in \eqref{equ formula for the invariant vector field ZddZ}--\eqref{equ formula for the invariant vector field ZddZ 2} has vanishing Lie bracket with the Reeb vector field. By \eqref{equ 3 formula Lxi is -J} and the characterization of $\upsilon$ above, we simply obtain  
$$L_{\upsilon}Z_{i}=\sqrt{-1}Z_{i}$$
from which \eqref{equ 1 formula Lxi is -J} follows.  

 We now prove item 2. 
\begin{eqnarray}& &L_{\frac{\partial}{\partial r}}\frac{\partial}{\partial Z_{i}}=[\frac{\partial}{\partial r},\frac{1}{Z_{i}}(Z_{i}\frac{\partial}{\partial Z_{i}})]=[\frac{\partial}{\partial r}(\frac{1}{Z_{i}})]\cdot (Z_{i}\frac{\partial}{\partial Z_{i}})+\frac{1}{Z_{i}}[\frac{\partial}{\partial r},\ Z_{i}\frac{\partial}{\partial Z_{i}}] \nonumber
\\&=& -\frac{1}{rZ_{i}}(Z_{i}\frac{\partial}{\partial Z_{i}})+\frac{1}{2\phi_{i}Z_{i}}\frac{\partial}{\partial r}= -\frac{1}{r}\frac{\partial}{\partial Z_{i}}+\frac{1}{2\phi_{i}Z_{i}}\frac{\partial}{\partial r}\nonumber
\\&=&  -\frac{(\frac{\partial}{\partial Z_{i}})^{\perp_{\frac{\partial}{\partial r}}}}{r},\nonumber
\end{eqnarray}
where we used that the orthogonal projection of $\frac{\partial }{\partial Z_{i}}$ to $\frac{\partial}{\partial r}$ is $\frac{r}{2\phi_{i}Z_{i}}\frac{\partial}{\partial r}$. 
\end{proof}
\subsection{On  the Auxiliary operator \label{App formula for Aux}}
We provide the routine tensor calculation for  Proposition \ref{lem equ for X vs equ for u and v}. 

\begin{lem}\label{lem formula for auxiliary operator} Under the conditions in Proposition \ref{lem equ for X vs equ for u and v} and  the splitting of tangent bundle $$T[(\mathbb{C}^{3}\setminus O)\times \mathbb{S}^{1}]=Span(\frac{\partial}{\partial s},\frac{\partial}{\partial r},\upsilon)\oplus D$$ where $D$ is the contact distribution on $\mathbb{S}^{5}$ and $\upsilon$ is the Reeb vector field, we write the vector field (whose co-efficients under the standard Euclidean basis only depend on $r,\ s$, see our assumption \eqref{equ vector field}  as $$X=X_{s}\frac{\partial}{\partial s}+X_{r}\frac{\partial}{\partial r}+X_{\upsilon}\upsilon+X_{0}$$ such that $X_{0}$ is $D-$valued. Then the auxiliary operator is 
\begin{eqnarray}\label{equ auxiliary with F}\star(F^{0}_{A}\wedge d[X\lrcorner\psi])&=& \star_{D^{\star}}\{[(L_{\frac{\partial}{\partial s}}X_{0})\lrcorner \frac{d\eta}{2}+(L_{\frac{\partial}{\partial r}}X_{0})\lrcorner H+\frac{(L_{\upsilon}X_{0})\lrcorner G}{r}]\wedge F^{0}_{A}\}\nonumber
\\&=& \star_{D^{\star}}\{[(L_{\frac{\partial}{\partial s}}X)\lrcorner \frac{d\eta}{2}+(L_{\frac{\partial}{\partial r}}X)\lrcorner H+\frac{(L_{\upsilon}X)\lrcorner G}{r}]\wedge F^{0}_{A}\}\nonumber
\\&=& -[(L_{\frac{\partial}{\partial s}}X)\lrcorner \frac{d\eta}{2}+(L_{\frac{\partial}{\partial r}}X)\lrcorner H+\frac{(L_{\upsilon}X)\lrcorner G}{r}]\lrcorner F^{0}_{A}.
\end{eqnarray}

\end{lem}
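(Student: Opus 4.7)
The plan is to compute $d(X\lrcorner\psi)$ by Cartan's formula and then wedge with $F^0_A$, exploiting the special properties of the curvature (semi-basic pullback from $\mathbb{P}^2$, primitive $(1,1)$, $\star_{D^\star}$-anti-self-dual on $D^\star\wedge D^\star$) to isolate the three surviving contributions. Since the model $G_2$-structure is torsion-free, $d\psi=0$ and hence $d(X\lrcorner\psi)=L_X\psi$. I would begin by writing $\psi$ on the Calabi-Yau cone times $\mathbb{S}^1$ under the splitting $T=\mathrm{Span}(\partial_s,\partial_r,\upsilon)\oplus D$: schematically $\psi=\tfrac12\omega^2-ds\wedge\mathrm{Im}\,\Omega$, where $\omega=r\,dr\wedge\eta+r^2\tfrac{d\eta}{2}$ and $\mathrm{Im}\,\Omega$ decomposes as a combination of $dr\wedge\eta\wedge(\text{something in }H,G)$ plus $r^3$ times a pure horizontal piece involving $H,G$. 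This gives an explicit expansion of $\psi$ in the adapted coframe $\{ds,dr,r\eta,r\,(\textrm{horizontal coframes})\}$.

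Next I would compute $L_X\psi$ componentwise, writing $X=X_s\partial_s+X_r\partial_r+X_\upsilon\upsilon+X_0$ and using that the coefficients depend only on $r,s$. Each Lie derivative breaks into two pieces: $dX_\bullet\wedge(\partial_\bullet\lrcorner\psi)+X_\bullet L_{\partial_\bullet}\psi$, with $dX_\bullet=\frac{\partial X_\bullet}{\partial r}dr+\frac{\partial X_\bullet}{\partial s}ds$. The Lie derivatives $L_{\partial_s}\psi=0$, $L_{\partial_r}\psi$, and $L_\upsilon\psi$ are computed directly from the Sasakian structure ($L_\upsilon\eta=0$, and the action of $L_\upsilon$ on $(H,G)$ generates their mutual rotation), and the derivatives on $X_0$ itself are handled by Lemma \ref{formula Lxi is -J}.

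In the third step I wedge the expanded $L_X\psi$ with $F^0_A$ and apply $\star$. Because $F^0_A$ is the pullback of a primitive Hermitian Yang-Mills form on $\mathbb{P}^2$, it annihilates $\partial_r$ and $\upsilon$ via contraction, kills $ds$-, $dr$-, and $\eta$-directions in the projector sense, and is $\star_{D^\star}$-anti-self-dual on horizontal $2$-forms; these combined vanishings wipe out all cross-terms coming from $X_s\partial_s$, $X_r\partial_r$, $X_\upsilon\upsilon$ (the content of the cited Fact \ref{fact srv do not contribute to Aux}) as well as the pieces of $L_{X_0}\psi$ that are not of the three required types. The surviving terms pair $(L_{\partial_s}X_0)\lrcorner\tfrac{d\eta}{2}$, $(L_{\partial_r}X_0)\lrcorner H$, and $\tfrac{1}{r}(L_\upsilon X_0)\lrcorner G$ with $F^0_A$ respectively, wedged inside the horizontal top-dimensional volume; this is the first displayed equality. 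The second follows because $\partial_s,\partial_r,\upsilon$ contract to zero with each of $\tfrac{d\eta}{2},H,G$, and the third is the standard identity $\star_{D^\star}(\alpha\wedge F^0_A)=-\alpha\lrcorner F^0_A$ for a semi-basic $1$-form $\alpha$ wedged against an anti-self-dual horizontal $2$-form.

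The main obstacle is the bookkeeping of the third step: ensuring that the many cross-terms generated by expanding $L_X\psi\wedge F^0_A$ in the splitting collapse exactly to the claimed combination requires using all three properties of $F^0_A$ (semi-basicity, primitivity, and $\star_{D^\star}$-anti-self-duality) simultaneously, and invoking the precise identifications $I=J_0$ on semi-basic forms and $\underline{T}=-J_G$ to match the horizontal pairings with $\tfrac{d\eta}{2},H,G$ coherently. The one place where a genuine computation is unavoidable is the verification that $L_{\partial_r}X_0$ (respectively $\tfrac{1}{r}L_\upsilon X_0$) couples only to $H$ (respectively $G$) and not to a mixed combination, which is the content of the Lie-derivative formulas in Lemma \ref{formula Lxi is -J} translated through the quaternion identity $J_HJ_0=J_G$.
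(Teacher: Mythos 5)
Your overall skeleton --- expand $\psi_{euc}$ in the Sasakian coframe, differentiate, wedge with $F^0_A$, exploit semi-basicity, primitivity and $\star_{D^\star}$-anti-self-duality, and finish with $\star_{D^\star}(\theta\wedge F^0_A)=-\theta\lrcorner F^0_A$ --- is the same as the paper's, but two steps in the middle have genuine gaps. First, your componentwise Lie-derivative computation assumes that the coefficients in the adapted splitting $X=X_s\frac{\partial}{\partial s}+X_r\frac{\partial}{\partial r}+X_\upsilon\upsilon+X_0$ depend only on $r,s$, so that $dX_\bullet=\frac{\partial X_\bullet}{\partial r}dr+\frac{\partial X_\bullet}{\partial s}ds$. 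That is false: the hypothesis \eqref{equ vector field} constrains the \emph{Euclidean} coefficients $X_i$, and the decomposition of the constant fields $e_i$ into $\frac{\partial}{\partial r},\upsilon,D$ varies over $\mathbb{S}^5$, so $X_r$, $X_\upsilon$ and especially $X_0$ do have transverse dependence. (For the $X_s,X_r,X_\upsilon$ part this error is harmless, since Fact \ref{fact srv do not contribute to Aux} kills that contribution for arbitrary $C^1$ coefficients; for $X_0$ it is not, because the transverse variation of $X_0$ is exactly what produces the extra terms your scheme never sees.)

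Second, and more seriously, your claim that all leftover pieces of $d(X_0\lrcorner\psi)$ are ``wiped out'' by pointwise properties of $F^0_A$ does not hold. The terms of the form (two of $ds,dr,\eta$) wedged with $d_0(X_0\lrcorner\Phi)$, $\Phi\in\{\tfrac{d\eta}{2},H,G\}$ --- the second line of the paper's remainder $Q(X_0)$ in \eqref{equ Q} --- pair with $F^0_A$ through the pointwise inner product of two semi-basic $2$-forms, and $d_0(X_0\lrcorner\Phi)$ has no reason to be pointwise orthogonal to $F^0_A$ by type or self-duality alone. Their vanishing is a differential identity: the paper converts the wedge into $d_0^{\star_{D^\star}}[(J_\bullet X_0)\lrcorner F^0_A]$ and then uses precisely $d_0X_i=0$ (the $r,s$-only dependence of the Euclidean coefficients) together with the transverse co-closedness of the contractions $re_i\lrcorner F^0_A$, which comes from the Bianchi identity and the Hermitian Yang--Mills condition (Step 1 of the proof of Proposition \ref{prop atiyah}). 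Your proposal never invokes this, and it locates ``the one unavoidable computation'' in checking that $L_{\frac{\partial}{\partial r}}X_0$ couples to $H$ and $\frac{1}{r}L_\upsilon X_0$ to $G$, which is comparatively routine bookkeeping; the essential step, and the only place where the restriction \eqref{equ vector field} is truly needed, is the vanishing of these $d_0$-terms against the curvature.
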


Strategy: it is completely routine.   
We simply calculate 
\begin{enumerate}
\item  the exterior derivative of $X\lrcorner \psi_{euc}$, then
\item  wedge it with the curvature then apply $\star$. 
\end{enumerate}
The idea for the first step is to separate $d(X_{0}\lrcorner \psi)$ into two parts, such that the first part contains $ds\wedge dr \wedge \eta$ as a byte, but the other does not. Then carrying out the second step,  the first part  yields the first line  on the right side of  \eqref{equ auxiliary with F}, the other part  yields the supplementary term $Q(X_{0})$ (see \eqref{equ d of Xcontractpsi}  below) which has vanishing wedge with the curvature. 

\begin{proof}[\textbf{Proof of Lemma \ref{lem formula for auxiliary operator}}:] 
The standard co-associative form on $\mathbb{C}^{3}\times \mathbb{S}^{1}$ is 
\begin{eqnarray}\label{equ Sasakian formula for psi}\psi_{euc}&=&\frac{\omega^{2}_{euc}}{2}+Im\Omega_{euc}\wedge ds \nonumber
\\&=&r^{3}dr\wedge\eta\wedge \frac{d\eta}{2}+\frac{r^{4}}{2} (\frac{d\eta}{2})^{2}-r^{3}ds\wedge\eta\wedge H+r^{2}ds\wedge dr\wedge G.
\end{eqnarray}

\subsubsection*{Step 1: The semi-basic component of the vector field} Let $X_{0}$ be a semi-basic vector field (contact distribution $D-$valued) on $\mathbb{C}^{3}\setminus O$, we compute
\begin{equation}\label{equ X0 contract psi}
X_{0}\lrcorner \psi_{euc}=r^{3}dr\wedge \eta\wedge (X_{0}\lrcorner \frac{d\eta}{2})-r^{3}ds\wedge \eta\wedge (X_{0}\lrcorner H)+r^{2}ds\wedge dr\wedge (X_{0}\lrcorner G)+\frac{r^{4}}{2}X_{0}\lrcorner (\frac{d\eta}{2})^{2}.
\end{equation}
We successively calculate the exterior derivative of each term in \eqref{equ X0 contract psi} using the Reeb Lie derivatives in  \cite[Section 3.4]{W}:
\begin{eqnarray}d[r^{3}dr\wedge \eta\wedge (X_{0}\lrcorner \frac{d\eta}{2})]&=&r^{3}ds\wedge dr\wedge \eta\wedge [\frac{\partial X_{0}}{\partial s}\lrcorner \frac{d\eta}{2}]-2r^{3}dr\wedge \frac{d\eta}{2}\wedge (X_{0}\lrcorner \frac{d\eta}{2})\\& &+r^{3}dr\wedge \eta\wedge d_{0}(X_{0}\lrcorner \frac{d\eta}{2}),\nonumber
\end{eqnarray}
\begin{eqnarray}\label{equ 1 Aux appendix}\nonumber d[r^{3}ds\wedge \eta\wedge (X_{0}\lrcorner H)]&=&-3r^{2}ds\wedge dr\wedge \eta\wedge (X_{0}\lrcorner H)-r^{3}ds\wedge dr\wedge \eta \wedge \frac{\partial}{\partial r}(X_{0}\lrcorner H)\\& &-2r^{3}ds\wedge \frac{d\eta}{2}\wedge (X_{0}\lrcorner H)+r^{3}ds\wedge \eta\wedge d_{0}(X_{0}\lrcorner H),
\end{eqnarray}
\begin{eqnarray}\label{equ 2 Aux appendix}\nonumber d[r^{2}ds\wedge dr\wedge (X_{0}\lrcorner G)]&=&r^{2}ds\wedge dr\wedge \eta\wedge [(L_{\upsilon}X_{0})\lrcorner G]-3r^{2}ds\wedge dr\wedge \eta \wedge (X_{0}\lrcorner H)\\& &+r^{2}ds\wedge dr\wedge d_{0}(X_{0}\lrcorner G).
\end{eqnarray}
Using the above $3$ identities and \eqref{equ X0 contract psi}, we find 
\begin{equation}  d(X_{0}\lrcorner \psi_{euc})\label{equ d of Xcontractpsi} 
= r^{3}ds\wedge dr\wedge \eta\wedge [\frac{\partial X_{0}}{\partial s}\lrcorner \frac{d\eta}{2}+\frac{\partial X_{0}}{\partial r}\lrcorner H+\frac{(L_{\upsilon}X_{0})\lrcorner G}{r}]+Q(X_{0}).
\end{equation}
where 
\begin{eqnarray}\label{equ Q}Q(X_{0})&=&-2r^{3}dr\wedge \frac{d\eta}{2}\wedge (X_{0}\lrcorner \frac{d\eta}{2})+\frac{r^{4}}{2}d[X_{0}\lrcorner (\frac{d\eta}{2})^{2}] +2r^{3}ds\wedge \frac{d\eta}{2}\wedge (X_{0}\lrcorner H)\\& &-r^{3}ds\wedge \eta\wedge d_{0}(X_{0}\lrcorner H) \nonumber +r^{2}ds\wedge dr\wedge d_{0}(X_{0}\lrcorner G)+r^{3}dr\wedge \eta\wedge d_{0}(X_{0}\lrcorner \frac{d\eta}{2})\\& &+2r^{3}dr\wedge [X_{0}\lrcorner(\frac{d\eta}{2})^{2}].\nonumber
\end{eqnarray}
The term $-3r^{2}ds\wedge dr\wedge \eta\wedge (X_{0}\lrcorner H)$ in \eqref{equ 1 Aux appendix} and \eqref{equ 2 Aux appendix} cancels out. 

Because the Hodge star of $ds\wedge dr\wedge \eta$ is semi basic, wedging  \eqref{equ d of Xcontractpsi} by $F^{0}_{A}$, it is to routine to verify that 
\begin{eqnarray}\label{eqn Aux with Q}Aux(X_{0})&=&\star\{F^{0}_{A}\wedge (r^{3}ds\wedge dr\wedge \eta)\wedge [\frac{\partial X_{0}}{\partial s}\lrcorner \frac{d\eta}{2}+\frac{\partial X_{0}}{\partial r}\lrcorner H+\frac{(L_{\upsilon}X_{0})\lrcorner G}{r}]\}
\\&  &+\star[F^{0}_{A}\wedge Q(X_{0})]\nonumber
\\&=&\nonumber \star_{D^{\star}}\{[(L_{\frac{\partial}{\partial s}}X_{0})\lrcorner \frac{d\eta}{2}+(L_{\frac{\partial}{\partial r}}X_{0})\lrcorner H+\frac{(L_{\upsilon}X_{0})\lrcorner G}{r}]\wedge F^{0}_{A}\}
\\&  &+\star[F^{0}_{A}\wedge Q(X_{0})].\nonumber
\end{eqnarray}

\subsubsection*{Step 2: The component of $X$ perpendicular to the contact distribution has no contribution to the auxiliary operator.}

\begin{fact}\label{fact srv do not contribute to Aux} For any $C^{1}-$ functions $X_{s},\ X_{\upsilon},\ X_{r}$  defined on a punched tubular ball in the model space, 
\begin{equation}\label{equ Aux with non basic term} Aux(X_{s}\frac{\partial}{\partial s}+X_{\upsilon}\upsilon+X_{r}\frac{\partial}{\partial r})=0.
\end{equation}
\end{fact}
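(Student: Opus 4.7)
The plan is to reduce the auxiliary operator on each of the three ``non-contact'' vector fields to a wedge product between $F^{0}_{A}$ and a self-dual $2$-form in $\wedge^{2}D^{\star}$, and then invoke the anti-self-duality of $F^{0}_{A}$ to conclude vanishing. Concretely, since $d\psi_{euc}=0$ on the flat model, Cartan's formula gives $d(X\lrcorner\psi_{euc})=L_{X}\psi_{euc}$. For $X=X_{s}\frac{\partial}{\partial s}+X_{\upsilon}\upsilon+X_{r}\frac{\partial}{\partial r}$ with coefficients depending only on $r,s$, Leibniz yields
\begin{equation*}
L_{X}\psi_{euc}=\sum_{Y\in\{\partial_{s},\upsilon,\partial_{r}\}}\bigl(dX_{Y}\wedge (Y\lrcorner\psi_{euc})+X_{Y}\,L_{Y}\psi_{euc}\bigr),
\end{equation*}
where each $dX_{Y}$ involves only $ds$ and $dr$.

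The heart of the matter is that $F^{0}_{A}$ is the pullback of a $(1,1)$ Hermitian Yang--Mills form on $\mathbb{P}^{2}$ and therefore lies in $\wedge^{2}D^{\star}$ and is $\star_{D^{\star}}$-anti-self-dual. Consequently
\begin{equation*}
F^{0}_{A}\wedge \tfrac{d\eta}{2}=F^{0}_{A}\wedge H=F^{0}_{A}\wedge G=0,\qquad F^{0}_{A}\wedge (\tfrac{d\eta}{2})^{2}=0,
\end{equation*}
where the first triple follows from the SD/ASD orthogonality on the rank $4$ bundle $D^{\star}$ and the last equality holds by the $4$-dimensional pigeonhole. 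It therefore suffices to verify that the $\wedge^{2}D^{\star}$-component of each of $Y\lrcorner\psi_{euc}$ and $L_{Y}\psi_{euc}$ (for $Y=\partial_{s},\upsilon,\partial_{r}$) lies in the self-dual span $\{\tfrac{d\eta}{2},H,G\}$.

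Using the Sasakian formula \eqref{equ Sasakian formula for psi} and the identities $\upsilon\lrcorner\eta=1$, $\upsilon\lrcorner d\eta=0$, $\partial_{r}\lrcorner\eta=0$, $L_{\partial_{r}}\eta=L_{\partial_{r}}(\tfrac{d\eta}{2})=L_{\partial_{r}}H=L_{\partial_{r}}G=0$ (these are pullbacks from $\mathbb{S}^{5}$/$\mathbb{P}^{2}$ hence scale invariant), together with the standard Sasakian identities $L_{\upsilon}H, L_{\upsilon}G\in\mathrm{span}(H,G)$ (cf.\ \cite[Section 3]{W}), a direct computation gives
\begin{equation*}
\partial_{s}\lrcorner\psi_{euc}=-r^{3}\eta\wedge H+r^{2}dr\wedge G,\ \ \partial_{r}\lrcorner\psi_{euc}=r^{3}\eta\wedge\tfrac{d\eta}{2}-r^{2}ds\wedge G,\ \ \upsilon\lrcorner\psi_{euc}=-r^{3}dr\wedge\tfrac{d\eta}{2}+r^{3}ds\wedge H.
\end{equation*}
Each $\wedge^{2}D^{\star}$-factor appearing is one of $\tfrac{d\eta}{2}, H, G$; the Lie derivatives $L_{\partial_{s}}\psi_{euc}=0$, $L_{\partial_{r}}\psi_{euc}$, $L_{\upsilon}\psi_{euc}$ are of the same shape. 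Wedging every resulting term with $F^{0}_{A}$ kills it by the ASD annihilation above. By linearity the three fields $X_{s}\partial_{s}$, $X_{\upsilon}\upsilon$, $X_{r}\partial_{r}$ contribute nothing to $Aux$, establishing \eqref{equ Aux with non basic term}.

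The only slightly delicate step is the book-keeping of $L_{\upsilon}H$ and $L_{\upsilon}G$; however they rotate within $\mathrm{span}(H,G)$ under the Reeb flow, so the self-dual character of the $\wedge^{2}D^{\star}$-component is preserved. No cancellation between distinct terms is needed: each summand vanishes individually once wedged with $F^{0}_{A}$.
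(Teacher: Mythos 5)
Your proof is correct and rests on the same mechanism as the paper's: every term produced from $(X_{s}\frac{\partial}{\partial s}+X_{\upsilon}\upsilon+X_{r}\frac{\partial}{\partial r})\lrcorner\psi_{euc}$ carries one of the self-dual bytes $\frac{d\eta}{2}$, $H$, $G$ (the paper gets this by differentiating the contraction directly with $dH=3\eta\wedge G$, $dG=-3\eta\wedge H$, which is equivalent to your Cartan--Leibniz bookkeeping since $d\psi_{euc}=0$), and wedging with the anti-self-dual semi-basic curvature $F^{0}_{A}$ kills each term individually. The only remark is that the Fact allows arbitrary $C^{1}$ coefficients, not just functions of $r,s$; your argument never actually uses that restriction, since the vanishing comes from the self-dual factors in $Y\lrcorner\psi_{euc}$ and $L_{Y}\psi_{euc}$ rather than from the shape of $dX_{Y}$, so the proof goes through as stated.
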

The proof is completely routine.  The distribution $span(\frac{\partial}{\partial s},\frac{\partial}{\partial r},\upsilon)$ is integrable (involutive) of which $X-X_{0}$ is a section.  The observation is that the exterior differential  of each term in 
\begin{equation}\label{equ contraction from difference}(X_{s}\frac{\partial}{\partial s}+X_{\upsilon}\upsilon+X_{r}\frac{\partial}{\partial r})\lrcorner \psi_{euc} \end{equation} contains at least one among  the $3-$forms $\frac{d\eta}{2},\ G,\ H$ as a byte. This is because every term in $\psi_{euc}$ itself contains one of these as byte, and applies the identities 
$$dH=3\eta\wedge G,\ dG=-3\eta\wedge H.$$ Therefore the wedge of \eqref{equ contraction from difference} and  the anti self-dual  curvature $F^{0}_{A}$ (as an $EndE-$valued section of $\wedge^{2}D^{\star}$) vanishes.

 To complete the proof of the Lemma, it suffices to show the following which indeed requires that the co-efficients of the vector field $X$ only depend on $r,\ s$. This condition is not applied so far.  
\subsubsection*{Step 3: the wedge between each term in $Q(X_{0})$ and $F^{0}_{A}$ is $0$.} We first show it for  the 3 terms in line 2 of \eqref{equ Q}. The observation is that the multiplication by a differentiable function of only $r,\ s$ commutes with the transverse Hodge dual operator $d_{0}^{\star_{D^{\star}}}$. Namely, on the first term among the $3$, it suffices to show
$$d_{0}(X_{0}\lrcorner H)\wedge F^{0}_{A}=0.$$
Taking $\star_{D^{\star}}$, the above is equivalent to
$$-d_{0}^{\star_{D^{\star}}}[(J_{H}X_{0})\lrcorner F^{0}_{A}]=0.$$
 Using $J_{H}$ invariance of the curvature,  it suffices to observe 
$$d_{0}^{\star_{D^{\star}}}[(J_{H}X_{0})\lrcorner F^{0}_{A}]=\Sigma_{i=1}^{6}\frac{X_{i}}{r}\cdot d_{0}^{\star_{D^{\star}}}[J_{H}(re_{i}\lrcorner F^{0}_{A})]=0$$ where we used 
$$d_{0}X_{i}=0\ \textrm{for all}\ i=1,..,6$$
because these co-efficients only depend on $r$ and $s$.  The other two terms are similar. 

To show the four terms in line 1 and 3 of  \eqref{equ Q} have vanishing wedge with the curvature, 
using the identity $X_{0}\lrcorner [\frac{1}{2}(\frac{d\eta}{2})^{2}]=\star_{D^{\star}}(X_{0}^{\sharp_{D^{\star}}})$, we calculate the the second term  in line 1 of \eqref{equ Q}:
\begin{eqnarray}\label{equ 0 proof aux formula}d[X_{0}\lrcorner \frac{(d\eta)^{2}}{4}]&= &ds\wedge [\frac{\partial X_{0}}{\partial s}\lrcorner \frac{(d\eta)^{2}}{4}]+dr\wedge [\frac{\partial X_{0}}{\partial r}\lrcorner \frac{(d\eta)^{2}}{4}]+\eta\wedge [L_{\upsilon}(X_{0})\lrcorner \frac{(d\eta)^{2}}{4}]
\\& &+d_{0}[X_{0}\lrcorner \frac{(d\eta)^{2}}{4}].\nonumber
\end{eqnarray}
Any form with a byte in $\wedge^{5}D^{\star}$ must vanish because the ($\mathbb{R}$) rank of the contact distribution $D^{\star}$ is $4$. Because the curvature $F^{0}_{A}$ is an endomorphism-valued semi basic $2-$form (pullback from $\mathbb{P}^{2}$), any form with a byte in $\wedge^{3}D^{\star}$ has vanishing wedge with the curvature. This is precisely the case for every term in \eqref{equ 0 proof aux formula}. The reason why the last term is semi-basic is simply that the transverse exterior differential $d_{0}$ of a semi basic form remains semi basic.  In summary, we find 
$$\frac{r^{4}}{2}d[X_{0}\lrcorner (\frac{d\eta}{2})^{2}]\wedge F^{0}_{A}=0.$$
Similarly,  the other 3 terms in line 1 and line 3 of \eqref{equ Q} also has byte of semi basic $3-$form. Then their wedge with the curvature also vanish
$$2r^{3}dr\wedge [X_{0}\lrcorner(\frac{d\eta}{2})^{2}]\wedge F^{0}_{A}\nonumber
=-2r^{3}dr\wedge \frac{d\eta}{2}\wedge (X_{0}\lrcorner \frac{d\eta}{2})\wedge F^{0}_{A}=2r^{3}ds\wedge \frac{d\eta}{2}\wedge (X_{0}\lrcorner H)\wedge F^{0}_{A}\nonumber
=0.$$
This means  $Q(X_{0})$ has no contribution to the auxiliary operator i.e. 
$$Q(X_{0})\wedge F^{0}_{A}=0. $$
The first two equal signs in \eqref{equ auxiliary with F} is proved by \eqref{eqn Aux with Q} and \eqref{equ Aux with non basic term}. The curvature $F^{0}_{A}$ is $\star_{D^{\star}}$ anti self dual. Then $$\star_{D^{\star}}(\theta\wedge F^{0}_{A})=-\theta\lrcorner F^{0}_{A}$$ for any semi basic $1-$form $\theta$. The last line in \eqref{equ auxiliary with F} is proved. 
\end{proof}

\subsection{Homomorphism between stable bundles on $\mathbb{P}^{2}$}
In proving Lemma \ref{cor rigidity of bundle on P2}, under the Chern number condition and others  therein,  the following is crucial to bound $h^{0}[\mathbb{P}^{2},EndE]$ and to show that the poly-stable bundle $E$ is  stable. 
\begin{lem}\label{lem Homorphism} On $\mathbb{P}^{n}$, any nontrivial sheaf  homomorphism between two stable locally free sheaves of the same slope is an isomorphism. 

Consequently, the space of such homomorphisms is either (complex) $0$ or $1-$dimensional. 
\end{lem}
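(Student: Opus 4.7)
}
The plan is to run the standard Schur-type argument for stable sheaves, with the only subtlety being the careful bookkeeping of ranks and saturations on $\mathbb{P}^{n}$. Let $\phi:E\to F$ be a nontrivial homomorphism between stable locally free sheaves of the same slope $\mu$, with ranks $r=\mathrm{rk}\,E$ and $s=\mathrm{rk}\,F$. I will analyze the exact sequence
\begin{equation*}
0\to \ker\phi \to E \to \mathrm{Im}\,\phi \to 0,\qquad \mathrm{Im}\,\phi\subset F.
\end{equation*}
Denote by $k$ the rank of $\mathrm{Im}\,\phi$ (equivalently of $E/\ker\phi$).

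First I would show $\ker\phi=0$. If $\ker\phi$ were a nonzero proper subsheaf, stability of $E$ would give $\mu(\ker\phi)<\mu$; then the seesaw identity
\begin{equation*}
r\cdot \mu= \mathrm{rk}(\ker\phi)\,\mu(\ker\phi)+k\,\mu(\mathrm{Im}\,\phi)
\end{equation*}
would force $\mu(\mathrm{Im}\,\phi)>\mu$. Letting $\widetilde{\mathrm{Im}\,\phi}\subset F$ denote the saturation (same rank, $c_{1}$ at least as large), we get $\mu(\widetilde{\mathrm{Im}\,\phi})\geq \mu(\mathrm{Im}\,\phi)>\mu=\mu(F)$. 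Stability of $F$ forces $\widetilde{\mathrm{Im}\,\phi}=F$, hence $k=s$; but then slope nondecrease under saturation together with $\mu(\mathrm{Im}\,\phi)\leq \mu(\widetilde{\mathrm{Im}\,\phi})=\mu(F)$ contradicts $\mu(\mathrm{Im}\,\phi)>\mu$. So $\ker\phi=0$ and $k=r$.

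Next I would show $\phi$ is surjective. Since $\phi$ is injective, $\mathrm{Im}\,\phi\cong E$ so $\mu(\mathrm{Im}\,\phi)=\mu$. If $r<s$, the saturation $\widetilde{\mathrm{Im}\,\phi}$ is a proper subsheaf of $F$ of rank $r<s$, and stability of $F$ gives $\mu(\widetilde{\mathrm{Im}\,\phi})<\mu$, contradicting $\mu(\mathrm{Im}\,\phi)=\mu\leq \mu(\widetilde{\mathrm{Im}\,\phi})$. Hence $r=s$, and $\mathrm{Im}\,\phi$ is a full-rank subsheaf of $F$ with $\mu(\mathrm{Im}\,\phi)=\mu(F)$; since $c_{1}(\mathrm{Im}\,\phi)=c_{1}(F)$ and the cokernel is supported in codimension $\geq 1$ (locally-free quotient considerations on $\mathbb{P}^{n}$), this forces $\mathrm{Im}\,\phi=F$. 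Thus $\phi$ is an isomorphism, proving the first assertion.

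For the dimension statement, fix one isomorphism $\phi_{0}:E\xrightarrow{\sim}F$ and consider the map $\mathrm{Hom}(E,F)\to \mathrm{End}(E)$, $\psi\mapsto \phi_{0}^{-1}\psi$. It suffices to show $\mathrm{End}(E)=\mathbb{C}$. Any endomorphism $\alpha$ of $E$ induces, by evaluation at a point or by the finite-dimensionality of $\mathrm{End}(E)\subset H^{0}(\mathbb{P}^{n},\mathcal{E}nd\,E)$, an action with at least one eigenvalue $\lambda\in\mathbb{C}$. Then $\alpha-\lambda\,\mathrm{id}_{E}$ is an endomorphism of the stable bundle $E$ with nonzero kernel, so by the first part it is zero, giving $\alpha=\lambda\,\mathrm{id}$. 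Hence $\mathrm{End}(E)\cong \mathbb{C}$ and $\dim_{\mathbb{C}}\mathrm{Hom}(E,F)\leq 1$. The main technical point to get right is the strict versus non-strict slope comparison across saturation in the rank-equals-$s$ case; this is where most analogous arguments stumble, so I would phrase it using the identity $c_{1}(\mathrm{Im}\,\phi)=c_{1}(\widetilde{\mathrm{Im}\,\phi})$ iff $\mathrm{Im}\,\phi=\widetilde{\mathrm{Im}\,\phi}$ to keep the logic clean.
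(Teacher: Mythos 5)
Your overall route is the same Schur-type argument as the paper's: the paper cites Okonek--Schneider--Spindler for the dichotomy (a nontrivial map between stable sheaves is injective or generically surjective), rules out unequal ranks by comparing slopes of the image viewed as a subsheaf of $E_{2}$ (Case A) and as a torsion-free quotient of $E_{1}$ with $c_{1}(\mathrm{Im}\,\phi)\leq c_{1}(E_{2})$ (Case B), and then invokes the OSS corollary that an equal-rank, equal-slope nontrivial map is an isomorphism; the consequence is quoted as simpleness of stable bundles. You unpack all of this by hand (kernel/seesaw for injectivity, saturation for the rank comparison, eigenvalue argument for simpleness), which is a legitimate and essentially equivalent proof.

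The one step you should tighten is the final surjectivity claim. Knowing $c_{1}(\mathrm{Im}\,\phi)=c_{1}(F)$ and that the cokernel is torsion only tells you the cokernel is supported in codimension $\geq 2$, not that it vanishes; and the ``identity'' you propose to lean on, namely $c_{1}(\mathrm{Im}\,\phi)=c_{1}(\widetilde{\mathrm{Im}\,\phi})$ iff $\mathrm{Im}\,\phi=\widetilde{\mathrm{Im}\,\phi}$, is false for general subsheaves (the ideal sheaf of a point inside $\mathcal{O}_{\mathbb{P}^{2}}$ has the same $c_{1}$ as its saturation $\mathcal{O}$ but is not saturated). What saves you here is precisely that $\mathrm{Im}\,\phi\cong E$ is \emph{locally free}: since $\mathrm{rk}\,E=\mathrm{rk}\,F$ and $c_{1}(E)=c_{1}(F)$, the determinant $\det\phi$ is a section of $\mathcal{H}om(\det E,\det F)\cong\mathcal{O}_{\mathbb{P}^{n}}$, nonzero because $\phi$ is injective, hence a nonzero constant and nowhere vanishing, so $\phi$ is an isomorphism everywhere (equivalently, use reflexivity: two locally free sheaves agreeing outside codimension $2$ with one contained in the other must coincide). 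This is exactly the content of the OSS corollary the paper cites, so either cite it or insert the determinant argument; as written, ``cokernel supported in codimension $\geq 1$'' does not force $\mathrm{Im}\,\phi=F$. The rest of your argument, including the eigenvalue proof that $\mathrm{End}(E)\cong\mathbb{C}$, is fine.
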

On projective curves, the similar result is well recorded in literature. But this particular version we need does not seem  very easy to find. Following \cite{OSS} verbatim,  we still give the detail for the reader's convenience. 
\begin{proof}Let $\phi:\ E_{1}\rightarrow E_{2}$ denote the nontrivial  homomorphism and stable bundles. \cite[Lemma 1.2.8]{OSS} says $\phi$ must be injective or generically surjective i.e. surjective on stalks at an arbitrary  point away from the singular locus of  $Coker\phi$.  By  \cite[Corollary page 171]{OSS}, it suffices to show $rankE_{1}=rankE_{2}$ by ruling out the following two cases.

Case A: suppose $rank E_{1}<rank E_{2}$. Then $\phi$ must be injective and $Image\phi$ is a sub-sheaf of $E_{2}$ of the same slope but lower rank. This  contradicts the stability of $E_{2}$.

Case B: suppose $rank E_{1}>rank E_{2}$, then it must be generically  surjective. Using that $Image\phi$ is a torsion free coherent quotient of $E_{1}$ \cite[Proof in page 170]{OSS}, we find $rankImage\phi=rankE_{2}$. Moreover, we have $c_{1}(Image \phi)\leq c_{1}(E_{2})$  \cite[Proof 1 in page 161]{OSS}. Thus  the torsion free quotient has less or equal slope: 
$$\mu(Image \phi)\leq \mu(E_{2})=\mu(E_{1}).$$
This contradicts the stability of $E_{1}$. 

The consequence holds by simpleness of stable bundles. \end{proof}

\small
 

\begin{thebibliography}{0}
\bibitem{Atiyah} M. Atiyah, \emph{Complex Analytic Connections in Fibre Bundles}. Trans. Amer. Math. Soc. 85 (1957), 181-207.
 \bibitem{Berger} M. Berger.  \emph{Sur les groupes d'holonomie homogènes des variétés a connexion affines et des variétés riemanniennes}.  Bull. Soc. Math. France. 83  (1953), 279-330.
\bibitem{Biquard} O. Biquard. \emph{D\'esingularisation de m\'etriques d\'Einstein. I}. Invent. Math. 192  (2013), no. 1, 197–252.
\bibitem{BK} S. Brendle and N. Kapouleas. \emph{Gluing Eguchi-Hanson metrics and a question of Page}. Comm. Pure Appl. Math. 70 (2017), no. 7, 1366-1401. 


\bibitem{GaoChen}G. Chen. \emph{$G_{2}$ Manifolds with Nodal Singularities Along Circles}. Journal of Geometric Analysis.
DOI: 10.1007/s12220-019-00283-3
\bibitem{Donaldson} S. Donaldson.  \emph{Deformations of multivalued harmonic functions}. arXiv:1912.08274
 \bibitem{DS} S. Donaldson, E. Segal.
\emph{Gauge Theory in higher dimensions, II}. from: “Geometry
of special holonomy and related topics”, Surv. Differ.
Geom. 16, International Press (2011) 1–41.
 \bibitem{DT} S. Donaldson, R. Thomas. \emph{Gauge Theory in Higher Dimensions}. from: “The Geometric Universe”, Oxford Univ. Press (1998) 31–47.
 \bibitem{FU} D. Freed, K. Uhlenbeck. \emph{Instantons and 4-manifolds}. Springer. (1984).



\bibitem{Trudinger} D. Gilbarg,  N. Trudinger. \emph{Elliptic Partial Differential Equations of
Second Order}. Springer.

\bibitem{HS}R. Hardt,\ L.Simon. \emph{Area minimizing hypersurfaces with isolated singularities}. Jour. Reine Ang. Math. 362 (1985), 102-129.






 \bibitem{JSW}A. Jacob, H. Sá Earp, T. Walpuski.\emph{Tangent cones of Hermitian Yang-Mills connections with isolated singularities}.
Mathematical. Research. Letters. 25 (2018), no.5, 1429-1445.
 \bibitem{JW}A. Jacob, T. Walpuski. \emph{Hermitian Yang-Mills metrics on reflexive sheaves over asymptotically cylindrical K\"ahler manifolds}.  Comm. Partial. Diff. Equ.
43 (2018), Issue 11, 1566-1598.
\bibitem{JasonLotay}J. Lotay. \emph{Coassociative $4-$folds with conical singularities}.  Comm. Anal. Geom. 15 (2007), no. 5, 891-946. 

    \bibitem{MazzeoSmale}R. Mazzeo, N. Smale. \emph{Perturbing away higher dimensional Singularities from Area Minimizing Hypersurfaces}. Comm. Anal. Geom. 2 (1994), no. 2, 313--336. 
\bibitem{MNS}G.  Menet, J. Nordstr\"om, H.  S\'a Earp. \emph{Construction of $G_{2}-$instantons via twisted connected sums}. arXiv:1510.03836.
\bibitem{Mukai}    S. Mukai. \emph{Symplectic structure of the moduli space of sheaves on an abelian or $K3$ surface}. Invent. Math. 77 (1984), 101-116. 
\bibitem{OSS}C. Okonek, M. Schneider, H. Spindler. \emph{Vector Bundles on Complex Projective Spaces}. Progress in Mathematics. 3 (1980). 
 \bibitem{SWal}H. S\'a Earp, T. Walpuski. \emph{$G_{2}-$instantons over twisted connected sums}.  Geom. Topol. 19 (2015),  1263-1285.
 \bibitem{Simons} J. Simons. \emph{On the transitivity of holonomy systems}. Ann. of Math. 76 (2) (1962). 213-234,

\bibitem{Takahashi} R. Takahashi. \emph{The moduli space of $\mathbb{S}^{1}$-type zero loci for $\frac{Z}{2}$-harmonic spinors in dimension 3}. arXiv:1503.00767 
  \bibitem{T} G. Tian. \emph{Gauge theory and calibrated geometry}.  Ann. of Math. 151 (2000), 193-268.
\bibitem{Walpuski}  T. Walpuski. \emph{$G_{2}-$instantons over twisted connected sums: an example}. Math. Res. Lett. 23 (2016), no. 2, 529-544.
  \bibitem{Watson} G.N. Watson. \emph{A Treatise on the Theory of Bessel Functions}. Cambridge Mathematical Library. 
 \bibitem{W}Y.Q. Wang. \emph{The spectrum of an operator associated with $G_{2}$-instantons with 1-dimensional singularities and Hermitian Yang-Mills connections with isolated singularities}. arXiv:1911.11979.

 \end{thebibliography}
\end{document}